\theoremstyle{plain}
\newtheorem{theorem}{Theorem}[section]		
\newtheorem{lemma}[theorem]{Lemma}
\newtheorem{proposition}[theorem]{Proposition}
\newtheorem{corollary}[theorem]{Corollary}
\newtheorem{conjecture}[theorem]{Conjecture}
\newtheorem{definition}[theorem]{Definition}
\theoremstyle{remark}
\newtheorem*{remark}{Remark}
\def\E{\mathbb{E}}
\def\EE{\mathcal{E}}
\DeclareMathOperator\ex{ex}
\newcommand{\eps}{\ensuremath{\varepsilon}}
\let\emptyset\varnothing
\newcommand\ab[1]{\lvert #1 \rvert}
\newcommand\cei[1]{\lceil #1 \rceil}
\newcommand\flo[1]{\lfloor #1 \rfloor}
\newcommand\up[1]{^{(#1)}}
\renewcommand{\P}{\mathbb{P}}
\renewcommand{\toc@licenseurl}{\toc@cclicenseurl} 
\begin{document}
\begin{frontmatter}[classification=text]
\title{Blowups of triangle-free graphs}
\author[antonio]{Ant\'onio Gir\~ao\thanks{Research supported by ERC Advanced Grant no. 883810.}}
\author[zach]{Zach Hunter}
\author[yuval]{Yuval Wigderson\thanks{Research supported by Dr.\ Max R\"{o}ssler, the Walter Haefner Foundation, and the ETH Z\"{u}rich Foundation.}}

\begin{abstract}
    A highly influential result of Nikiforov states that if an $n$-vertex graph $G$ contains at least $\gamma n^h$ copies of a fixed $h$-vertex graph $H$, then $G$ contains a blowup of $H$ of order $\Omega_{\gamma,H}(\log n)$. While the dependence on $n$ is optimal, the correct dependence on $\gamma$ is unknown; all known proofs yield bounds that are polynomial in $\gamma$, but the best known upper bound, coming from random graphs, is only logarithmic in $\gamma$. It is a major open problem to narrow this gap.

    We prove that if $H$ is triangle-free, then the logarithmic behavior of the upper bound is the truth. That is, under the assumptions above, $G$ contains a blowup of $H$ of order $\Omega_H (\log n/{\log(1/\gamma)})$. This is the first non-trivial instance where the optimal dependence in Nikiforov's theorem is known.

    As a consequence, we also prove an upper bound on multicolor Ramsey numbers of blowups of triangle-free graphs, proving that the dependence on the number of colors is polynomial once the blowup is sufficiently large. This shows that, from the perspective of multicolor Ramsey numbers, blowups of fixed triangle-free graphs behave like bipartite graphs.
\end{abstract}
\end{frontmatter}

\section{Introduction}
\subsection{Background and main results}
Given an integer $k$ and a graph $H$, its \emph{blowup} $H[k]$ is the graph obtained from $H$ by replacing every vertex by an independent set of order $k$, and every edge by a copy of the complete bipartite graph $K_{k,k}$. Blowups are fundamental objects in graph theory, and many important results in extremal graph theory concern the problem of finding large $H$-blowups in graphs with certain properties. For example, the Erd\H os--Stone theorem \cite{MR0018807} states that given any graph $H$, integer $k$, and parameter $\varepsilon>0$, any sufficiently large graph $G$ with edge density at least $1-\frac{1}{\chi(H)-1}+\varepsilon$ contains $H[k]$ as a subgraph. Much of the subsequent work in extremal graph theory, culminating in the Chv\'atal--Szemer\'edi theorem \cite{MR0609100}, has been focused on determining the optimal ``sufficiently large'' condition in this theorem.

A closely related line of research was initiated by Nikiforov \cite{MR2409174,MR2398823}, who proved the following remarkable theorem. 
\begin{theorem}[Nikiforov \cite{MR2409174,MR2398823}]\label{thm:nikiforov}
    Let $H$ be an $h$-vertex graph, let $\gamma>0$, and let $n$ be sufficiently large. If $G$ is an $n$-vertex graph with at least $\gamma n^h$ copies of $H$, then $G$ contains an $H$-blowup $H[k]$, for some
    \[
        k \geq c_H(\gamma) \log n,
    \]
    where $c_H(\gamma)>0$ is a constant depending only on $\gamma$ and $H$.
\end{theorem}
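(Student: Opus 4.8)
The plan is to prove Theorem~\ref{thm:nikiforov} by \emph{dependent random choice}, building the blowup one vertex-class of $H$ at a time while controlling how many copies of the still-unembedded part of $H$ survive. First I would pass to the $h$-partite setting: color $V(G)$ uniformly at random with the color set $[h]=V(H)$ and keep only those copies of $H$ whose $i$-th vertex gets color $i$. A constant fraction (depending only on $H$) of the copies survive in expectation, so we may fix a coloring leaving $\geq \gamma_1 n^h$ such ``partite'' copies, where $\gamma_1 = \gamma_1(\gamma,H)>0$. Deleting every edge not joining classes $i,j$ with $ij\in E(H)$ and writing $V_1,\dots,V_h$ for the classes, the task becomes: $G$ is $h$-partite with all edges between the pairs prescribed by $E(H)$, there are $\geq \gamma_1 n^h$ partite copies of $H$, and we must find $W_i \subseteq V_i$ with $|W_i|=k$ so that $G[W_i,W_j]$ is complete for each $ij\in E(H)$.

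For the main argument I would fix an ordering $v_1,\dots,v_h$ of $V(H)$ and choose $W_1,W_2,\dots$ in this order, maintaining after $t$ steps: classes $W_1,\dots,W_t$ with all required completeness among themselves, and ``candidate sets'' $U_j\subseteq V_j$ for $j>t$ such that (i) $U_j$ lies in the common neighborhood of every $W_i$ with $i\leq t$ and $v_iv_j\in E(H)$, so that any later completion drawn from the $U_j$'s automatically respects all edges back to $W_1,\dots,W_t$; and (ii) the number of partite copies of $H[\{v_{t+1},\dots,v_h\}]$ inside $\prod_{j>t}U_j$ is at least $\gamma_t\prod_{j>t}|U_j|$ with $\gamma_t\geq \poly_H(\gamma)$, and $|U_j|\geq \poly_H(\gamma)\cdot n$. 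Passing from step $t$ to step $t+1$: first clean $U_{t+1}$, discarding vertices with fewer than $\poly_H(\gamma)\,|U_j|$ neighbors in $U_j$ for some later $H$-neighbor $v_j$ of $v_{t+1}$ --- this costs only a constant fraction of the copies by invariant (ii) --- and then pick $W_{t+1}$ as a random $k$-subset of the cleaned set drawn with a bias towards vertices lying in many copies. A power-mean (convexity) estimate then shows that in expectation the number of copies of the remaining part of $H$ that survive the update $U_j \mapsto U_j \cap \bigcap_{x\in W_{t+1}}N(x)$, as well as the new sizes $|U_j|$, remain large enough to re-establish (i)--(ii) with the density dropping by only a $\poly_H(\gamma)$ factor. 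After $h=O_H(1)$ steps all classes have been chosen, yielding $H[k]$; in the best case the only surviving constraint is $n\cdot(\poly_H(\gamma))^{O_H(k)}\geq k$, i.e.\ $k\leq c_H(\gamma)\log n$.

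The step I expect to be the real obstacle is precisely this combined dependent-random-choice estimate. A naive union bound there loses a factor \emph{exponential} in $k$ per step --- forcing an entire $k$-vertex class into a common neighborhood costs roughly $\gamma^k$ --- which is fatal; and even after repairing this via the copy-biased random choice and convexity, the crude bookkeeping compounds the $h$ steps multiplicatively in the exponent of $\gamma$ and delivers only $k=\Omega_H\!\big((\log n/\log(1/\gamma))^{1/h}\big)$ instead of something linear in $\log n$. Getting the honest linear dependence --- which is exactly what separates Theorem~\ref{thm:nikiforov} from the soft ``hypergraph box'' estimate one obtains by forgetting that $G$ is a graph rather than merely an $h$-uniform hypergraph of $H$-copies --- requires reorganizing the recursion so that the copy count is reused across steps rather than degrading multiplicatively, while keeping $c_H(\gamma)$ bounded away from $0$; making this work is the technical heart of the argument.
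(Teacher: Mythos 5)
Theorem~\ref{thm:nikiforov} is cited from Nikiforov \cite{MR2409174,MR2398823}; the paper does not prove it, so there is no in-paper proof to compare against. The paper's own argument is for the triangle-free strengthening (Theorem~\ref{thm:nikiforov partite} via Lemma~\ref{lem:induct}), and it follows a genuinely different induction from the one you propose: it removes one vertex $h$ of $H$ per step, forms the auxiliary bipartite graph $\Gamma$ between $A=V_1\times\cdots\times V_t$ and $B=V_{t+1}\times\cdots\times V_h$ with $t=d_H(h)$, samples $s=\Theta(\log n/\log(1/\gamma))$ random \emph{tuples} from $B$, and uses triangle-freeness (equivalently, that $N_H(h)$ is independent) to conclude that the common $\Gamma$-neighborhood of the sample is a product set $S_1\times\cdots\times S_t$. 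That product structure is what keeps the per-step loss down to a fixed power $\gamma^{C_H}$ in density and only a $\sqrt{n}$ shrink in the parts, giving the linear $\log n/\log(1/\gamma)$ bound; and it simply does not exist for general $H$, which is precisely why the paper does not claim to prove Theorem~\ref{thm:nikiforov} with an improved $\gamma$-dependence.

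As for your sketch: the gap you flag is real, and it is not bookkeeping. Picking $W_{t+1}$ as a $k$-set and updating $U_j\mapsto U_j\cap\bigcap_{x\in W_{t+1}}N(x)$ shrinks each $|U_j|$ by a factor of order $\gamma^{\Theta(k)}$; since $k$ grows with $n$, this already kills invariant~(ii) after one step, and biasing the choice of $W_{t+1}$ and invoking convexity can rescue the copy \emph{density} $\gamma_t$ but not the set \emph{sizes} $|U_j|$, which is the actual obstruction. The cure — the whole point of dependent random choice, and the ingredient your plan lacks — is never to commit to a $k$-set in one shot: one samples $s=\Theta(\log n/\log(1/\gamma))$ \emph{single} vertices with repetition, passes to their common neighborhood (a set whose expected size is at least $\gamma^s n = n^{\Omega(1)}$, by Jensen), and only afterwards, after pruning a small number of atypical vertices, extracts a $k$-subset from inside this polynomially large reservoir. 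The per-step cost is then polynomial in $n$ and in $\gamma$ rather than exponential in $k$. But even after this repair you would not have a proof of Theorem~\ref{thm:nikiforov}: without the product structure available in the triangle-free case, propagating the copy count of $H[\{v_{t+1},\dots,v_h\}]$ through the induction while losing only $\poly_H(\gamma)$ requires further ideas, and this is exactly where Nikiforov's own (more lossy, $c_H(\gamma)=\Omega(\gamma^{h^2})$) argument departs from, and goes beyond, what you have sketched.
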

This result gives the best possible dependence on $n$, since a standard computation shows that a random $n$-vertex graph has, with high probability, $\Omega(n^h)$ copies of $H$ and no copy of $H[k]$ for any $k \geq 2\log n$. We remark too that one natural approach to proving such a theorem---passing to an auxiliary $h$-uniform hypergraph whose edges are the $H$-copies in $G$---does not work, and can only prove a bound of $k = \Omega((\log n)^{\frac{1}{h-1}})$. Thus, \cref{thm:nikiforov} is one of many theorems (in addition to the famous (6,3) theorem \cite{MR0519318}, among others) capturing the idea that the $H$-copies in a graph $G$ have extra structure beyond what is found in a general $h$-uniform hypergraph.

While \cref{thm:nikiforov} gives the optimal $n$-dependence, the optimal dependence on $\gamma$ and $H$ is unknown. Nikiforov proved that, for any fixed graph $H$, one can take $c_{H}(\gamma) = \Omega(\gamma^{h})$ if $H=K_h$ is a complete graph, and $c_H(\gamma)=\Omega(\gamma^{h^2})$ in general. However, this is very far from the best known upper bound $c_H(\gamma) = O_H(1/{\log \frac 1 \gamma})$, which again comes from considering a random graph of the appropriate edge density, namely $\gamma^{1/e(H)}$.

\cref{thm:nikiforov} is an extremely useful result with many applications (e.g.\ \cite{MR2520282,MR2993136,MR4625871,MR4195582}), and as such, there have been several attempts to improve the bounds on $c_H(\gamma)$. R\"odl and Schacht \cite{MR2993136} proved that we may take $c_{K_h}(\gamma) \geq \gamma^{1+o(1)}$ when $H$ is complete, and Fox--Luo--Wigderson \cite{MR4195582} improved this to $c_H(\gamma) \geq \gamma^{1-1/e(H)+o(1)}$ for all $H$. However, just as in Nikiforov's original argument, all of these bounds are still polynomial in $\gamma$, whereas the best known upper bound is logarithmic. The only case where the truth is known is when $H$ is bipartite (which is a degenerate case of the problem); in this case, the K\H ov\'ari--S\'os--Tur\'an theorem \cite{MR0065617} immediately implies that $c_H(\gamma) = \Omega_H(1/{\log \frac 1\gamma})$, matching the upper bound up to a constant factor.

Our main result proves the same bound for all triangle-free graphs $H$, yielding the first non-trivial case where the optimal bound in \cref{thm:nikiforov} is known.
\begin{theorem}\label{thm:main}
    For every triangle-free graph $H$ on $h$ vertices, there exists a constant $\alpha_H>0$ such that the following holds for all $0<\gamma\leq \frac 12$ and all $n$. If $G$ is an $n$-vertex graph with at least $\gamma n^h$ copies of $H$, then $H[k] \subseteq G$, for some
    \[
        k \geq \alpha_H \frac{\log n}{\log \frac 1 \gamma}.
    \]
\end{theorem}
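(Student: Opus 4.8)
The plan is to prove, by induction on $|V(H)|$, a statement stronger than \cref{thm:main} that also controls the sizes of certain common neighbourhoods; the role of triangle-freeness will be, throughout, that $N_H(v)$ is an independent set for every vertex $v$. First a trivial reduction: if $\log\frac1\gamma\ge\log n$ then the hypothesis forces at least $\gamma n^h>0$, hence at least one, copy of $H$, so $H=H[1]\subseteq G$, which suffices since $\alpha_H\log n/\log\frac1\gamma\le\alpha_H\le1$. So we may assume $\gamma>n^{-1}$, i.e.\ $\gamma$ is polynomially bounded below in $n$. For a vertex set $X$ write $N_G(X)=\bigcap_{x\in X}N_G(x)$. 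The statement to prove by induction is: \emph{for every triangle-free $H$ and every finite family $\mathcal{S}$ of independent sets of $H$ there are $\alpha,C>0$ (depending on $H,\mathcal{S}$) such that, if $G$ is an $n$-vertex graph with $\sum_{\phi}\prod_{S\in\mathcal{S}}|N_G(\phi(S))|\ge\gamma\,n^{|V(H)|+|\mathcal{S}|}$ — the sum over all copies $\phi$ of $H$ in $G$, with $\phi(S)=\{\phi(v):v\in S\}$ — then $G$ contains a blowup $H[k]$, with parts $\{U_v\}$, such that $k\ge\alpha\log n/\log\frac1\gamma$ and $|N_G(\bigcup_{v\in S}U_v)|\ge Ck$ for all $S\in\mathcal{S}$.} Taking $\mathcal{S}=\emptyset$ recovers the theorem.

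For the inductive step, suppose first that $\mathcal{S}$ does not cover $V(H)$ and fix $v_0\notin\bigcup_{S\in\mathcal{S}}S$. Every copy $\psi$ of $H-v_0$ extends to at most $|N_G(\psi(N_H(v_0)))|$ copies of $H$, and no $S\in\mathcal{S}$ meets $v_0$, so the hypothesis for $(H,\mathcal{S})$ yields the corresponding inequality for $(H-v_0,\ \mathcal{S}\cup\{N_H(v_0)\})$ with the same $\gamma$ and the same ``effective dimension'' $|V(H)|+|\mathcal{S}|$; here we use that $N_H(v_0)$ is independent (triangle-freeness) and avoids $v_0$. The inductive hypothesis applies since $|V(H-v_0)|<|V(H)|$, and delivers an $(H-v_0)$-blowup with all the prescribed common neighbourhoods large — in particular $N_G(\bigcup_{w\in N_H(v_0)}U_w)$ is large — so we can choose $U_{v_0}$ to be a $k$-subset of it, disjoint from the parts already placed. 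The result is an $H$-blowup, and the conditions indexed by the original $\mathcal{S}$ are untouched because those sets avoid $v_0$.

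This leaves the base case $V(H)=\bigcup_{S\in\mathcal{S}}S$, which is the heart of the argument. (This is how the peeling terminates, with $\mathcal{S}=\{N_{H'}(a):a\in A\}$ for a maximal independent set $A$ of some larger triangle-free $H'$ and $H=H'-A$; equivalently, adjoining a new vertex $z_S$ joined to $S$ for each $S$ gives a triangle-free graph on $|V(H)|+|\mathcal{S}|$ vertices for which we seek a large blowup, so this case cannot be simplified further by peeling.) The plan is to handle it by dependent random choice: sample a bounded number (depending only on $H$) of suitably weighted copies of $H$, build candidate vertex-sets for the parts by intersecting neighbourhoods of the samples, and bound the number of ``bad'' bounded-size subsets — bad meaning its common neighbourhood is too small, or too few copies of the relevant subgraph pass through it — so that a simultaneously good choice of all parts survives. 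The crucial point is that every set whose common neighbourhood must be kept large, namely $\bigcup_{v\in S}U_v$ with $S$ independent, is an independent set in the blowup; hence the governing count is the same convexity estimate underlying the K\H ov\'ari--S\'os--Tur\'an theorem, and this is what produces parts of order $\Omega(\log n/\log\frac1\gamma)$ rather than $\gamma^{\Omega(1)}\log n$. When $H$ is bipartite one can take $A$ to be a colour class, so $H-A$ is edgeless and this reduces to a plain dependent-random-choice argument; the theorem says triangle-freeness already suffices to stay in that favourable regime. (If some $S$ contained an edge, $\bigcup_{v\in S}U_v$ would contain a $K_{k,k}$, and controlling its common neighbourhood would cost a power of $\gamma$ — precisely the obstruction that keeps the general case of \cref{thm:nikiforov} open.)

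The step I expect to be the main obstacle is this last dependent-random-choice argument: one must control several common neighbourhoods \emph{and} the supply of copies of the remaining subgraph at once, and propagate both faithfully through the peeling, keeping every loss in the exponent of $\gamma$ bounded in terms of $H$ alone. The remaining ingredients — the trivial small-$\gamma$ regime, keeping the $|V(H)|$ parts disjoint with room to spare, and tracking the constants $\alpha,C$ up the finite induction tree — should be routine. The stated corollary on multicolour Ramsey numbers of $H$-blowups then follows from \cref{thm:main} by a standard argument.
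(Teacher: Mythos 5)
Your high-level reduction is conceptually in the same spirit as the paper's: work in a partite setting, track common neighborhoods, and exploit that triangle-freeness makes $N_H(v)$ independent for every $v$, so the relevant common neighborhoods factor as products. Your peeling step --- removing $v_0 \notin \bigcup_{S\in\mathcal{S}} S$ and adding $N_H(v_0)$ to $\mathcal{S}$, with the effective dimension $|V(H)|+|\mathcal{S}|$ conserved --- is correct as stated.

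However, there is a genuine gap: you defer the \emph{entire} dependent-random-choice argument to the ``base case'' where $\mathcal{S}$ covers $V(H)$, and this base case is not proved. This is not residual technical work --- it is the theorem itself in disguise. As you observe, adjoining a new vertex $z_S$ joined to each $S\in\mathcal{S}$ produces a triangle-free graph $H''$ on $|V(H)|+|\mathcal{S}|$ vertices, and since peeling preserves $|V(H)|+|\mathcal{S}|$, the base case reached from an initial pair $(H,\emptyset)$ is precisely the problem of finding a large blowup of a triangle-free $H''$ with \emph{as many vertices as the original $H$}. Your induction on $|V(H)|$ therefore does not reduce the hard core of the problem at all; it only relabels it. The base-case sketch (``sample a bounded number of weighted copies of $H$, intersect neighborhoods, bound bad subsets'') would have to control simultaneously the common neighborhoods of all $S\in\mathcal{S}$ \emph{and} the supply of copies of the residual subgraph, and it is far from clear this goes through at the $\Omega(\log n/\log(1/\gamma))$ scale; you flag this yourself as ``the main obstacle.''

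The paper avoids this by doing the dependent random choice \emph{inside} each inductive step (\cref{lem:induct}) rather than accumulating constraints for later. For a single vertex $h$ of $H$ with $N_H(h)=\{1,\dots,t\}$, one samples $s=\Theta(\log n/\log(1/\gamma))$ tuples $b_1,\dots,b_s\in V_{t+1}\times\cdots\times V_h$; triangle-freeness gives the common neighborhood the product form $S_1\times\cdots\times S_t$; and by tracking two ``bad'' random variables one shows that with positive probability each $S_i$ has size at least $\sqrt n$, the tuple $(S_1,\dots,S_t,V_{t+1},\dots,V_{h-1})$ is a $(\gamma^{O_H(1)},\geq\sqrt n)$-inflation of $H\setminus\{h\}$, and the final coordinates of the $b_\ell$ furnish a set $V_h^*$ of size $\Omega(\log n/\log(1/\gamma))$ complete to every $S_i$. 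Each inductive step thus outputs one part of the blowup and hands down a bona fide inflation of a strictly smaller graph, with only polynomial losses that become harmless constant factors after taking logarithms; there is no unresolved base case. I'd suggest reworking your proposal along those lines: carry out the DRC one vertex at a time, and make the output of each step an inflation of $H$ minus that vertex rather than a growing family of constraints deferred to the end.
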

It is natural to conjecture that the same result is true for all graphs $H$. However, it appears that proving this, even in the simplest case of $H=K_3$, would require substantial new techniques.

As a consequence of \cref{thm:main}, we obtain a surprising result about multicolor Ramsey numbers. Recall that, for a graph $F$ and an integer $q \geq 2$, the \emph{Ramsey number} $r(F;q)$ is defined as the least integer $N$ such that every $q$-coloring of $E(K_N)$ contains a monochromatic copy of $F$. In general, our understanding of $r(F;q)$ is rather poor; for example, it is a major open problem \cite{MR1815606,MR4357431,MR0177846} to determine whether $r(K_3;q)$ grows exponentially or super-exponentially with $q$, and an even more major open problem \cite{MR366726,MR2552114,MR4548417,2303.09521} to determine the growth rate of $r(K_k;2)$ as $k \to \infty$. However, for complete bipartite graphs, our understanding is fairly complete, and it is known \cite{MR0360329} that
\begin{equation}\label{eq:multicolor K_kk}
    q^{ck} \leq r(K_{k,k};q) \leq q^{Ck}
\end{equation}
for all $q,k \geq 2$, where $C>c>0$ are absolute constants. Here, the lower bound follows from a random coloring, and the upper bound follows from the K\H ov\'ari--S\'os--Tur\'an theorem. In particular, this implies that for fixed $F$,
$r(F;q)$ grows polynomially in $q$ if $F$ is bipartite, whereas it is easy to see\footnote{Indeed, since $K_{2^q}$ is the edge-union of $q$ bipartite graphs, we have $r(F;q)>2^q$ in case $F$ is non-bipartite.}  that $r(F;q)$ grows at least exponentially in $q$ if not. However, our next result shows that if $F$ is a large blowup of a fixed triangle-free graph, then the dependence on $q$ \emph{does} eventually become polynomial.
\begin{theorem}\label{thm:multicolor ramsey}
    Let $H$ be an $h$-vertex triangle-free graph, and let $q \geq 2$ be an integer. If $k \geq 100^h q^{4h^2}$, then
    \[
        r(H[k];q) \leq q^{\Lambda_H k},
    \]
    where $\Lambda_H>0$ is a constant depending only on $H$.
\end{theorem}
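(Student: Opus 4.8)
The plan is to deduce this from Theorem 1.3 by a standard supersaturation-plus-counting argument. Suppose we $q$-color $E(K_N)$ with $N = q^{\Lambda_H k}$ and $\Lambda_H$ a large constant to be chosen. By pigeonhole, some color class $G$ contains at least $\binom{N}{2}/q \geq N^2/(3q)$ edges. We want to conclude that $G$ contains many copies of $H$, so that Theorem 1.3 applies and yields a blowup $H[k]$ in $G$ — which is then a monochromatic $H[k]$ in the original coloring. The point is that Theorem 1.3 needs $k \geq \alpha_H \log N / \log(1/\gamma)$, i.e.\ $\log N \geq (k/\alpha_H)\log(1/\gamma)$, so as long as $\gamma$ is not too small (polynomially small in $1/q$ will suffice) and $N = q^{\Lambda_H k}$ with $\Lambda_H$ large enough in terms of $\alpha_H$, the hypothesis is met.

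First I would establish the supersaturation step: a graph $G$ on $N$ vertices with at least $N^2/(3q)$ edges contains at least $\gamma N^h$ copies of $H$ for some $\gamma = \gamma(q,H) > 0$ that is polynomially small in $1/q$. Since $H$ is triangle-free and in particular has chromatic number at most $3$ — but more relevantly, one can use the Kővári–Sós–Turán/dependent-random-choice style bound or simply the standard fact that a graph with edge density $d$ contains $\Omega_H(d^{e(H)} N^h)$ copies of $H$ provided $d \geq N^{-c_H}$ — with $d \asymp 1/q$ we get $\gamma \geq c_H q^{-e(H)}$, say. (If one worries about the $N^{-c_H}$ threshold, note $N$ is huge compared to $q$, so $d = \Theta(1/q) \gg N^{-c_H}$.) Thus $\log(1/\gamma) \leq e(H)\log q + O_H(1) \leq 2e(H)\log q$ for $q$ large, and trivially $e(H) \leq h^2$.

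Now apply Theorem 1.3 to $G$ with this $\gamma$: it gives $H[k'] \subseteq G$ with
\[
k' \geq \alpha_H \frac{\log N}{\log(1/\gamma)} \geq \alpha_H \frac{\Lambda_H k \log q}{2e(H)\log q} = \frac{\alpha_H \Lambda_H}{2e(H)}\, k.
\]
Choosing $\Lambda_H = 2e(H)/\alpha_H$ (or any larger constant) forces $k' \geq k$, so $H[k] \subseteq H[k'] \subseteq G$, which is a monochromatic copy of $H[k]$ as desired. The hypothesis $k \geq 100^h q^{4h^2}$ is not really needed for this skeleton; it is presumably there to guarantee that $N = q^{\Lambda_H k}$ is large enough to be "sufficiently large" wherever that is implicitly required (e.g.\ to absorb the $O_H(1)$ additive error in $\log(1/\gamma)$ and to ensure the supersaturation threshold $d \gg N^{-c_H}$ genuinely holds, and to make $\gamma \leq 1/2$ as Theorem 1.3 requires), and I would simply carry this condition through the inequalities to check those minor points.

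The only genuine content beyond bookkeeping is the supersaturation step, and even there the main subtlety is making sure the count of $H$-copies is polynomial in $1/q$ rather than, say, exponentially small — which is exactly what the standard counting lemma delivers since $H$ is a fixed graph. So I expect no serious obstacle; the proof is essentially "pigeonhole $\to$ supersaturation $\to$ Theorem 1.3", with the constants chosen so that the logarithmic-in-$\gamma$ dependence in Theorem 1.3 exactly matches the polynomial-in-$q$ target.
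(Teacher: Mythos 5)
Your plan breaks down at the supersaturation step, and this is not a bookkeeping issue but the central difficulty of the theorem. You claim that a graph $G$ on $N$ vertices with edge density $d = \Theta(1/q)$ contains $\Omega_H(d^{e(H)}N^h)$ copies of $H$. This ``standard counting fact'' is only true for \emph{bipartite} $H$ (it is precisely the K\H ov\'ari--S\'os--Tur\'an supersaturation). For non-bipartite $H$, the conclusion is simply false: a complete bipartite graph has edge density $\approx 1/2$ and zero copies of $C_5$, which is triangle-free. More generally, in a $q$-coloring where one color class is complete bipartite, that class can be the densest one under pigeonhole yet contain no copy of any non-bipartite $H$. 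So the step ``pigeonhole gives a dense color $\Rightarrow$ that color has polynomially many $H$-copies'' is unjustified, and in fact cannot be justified by choosing the color via density alone.

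This is exactly the obstacle the paper addresses in its proof outline. It notes that the naive monochromatic-$H$-count (Erd\H os's Ramsey multiplicity bound) can be as small as $2^{-\Omega_H(q)}N^h$ for connected non-bipartite $H$, which if plugged into Theorem 1.2 only gives $r(H[k];q)\leq 2^{O_H(qk)}$, not $q^{O_H(k)}$. The actual proof does something genuinely different: it proves Lemma 4.1, which finds a \emph{large vertex subset} $V$ with $|V|\geq \eta N$ and a color on which the induced coloring contains a $((2q)^{-e(K_h)},\geq \eta N)$-inflation of $K_h$, i.e.\ the monochromatic $K_h$-count on $V$ (and hence the $H$-count, since $H\subseteq K_h$) is polynomially small in $1/q$. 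Proving this requires a density-increment argument (Lemmas 4.3--4.5 and Proposition 4.6), or alternatively a regularity-lemma argument which the paper sketches; shrinking the vertex set is essential to boost the multiplicity from exponentially small to polynomially small in $q$. Your deduction from Theorem 1.3 (the partite version) is fine once you have such a monochromatic inflation, and your choice of constants is in the right spirit, but the inflation itself is the missing ingredient and it is not supplied by supersaturation in a single dense color class.
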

This result is best possible up to the constant $\Lambda_H$, since a random coloring again witnesses that $r(H[k];q)\geq q^{ck}$ for every non-empty graph $H$ and some absolute constant $c>0$. Moreover, as discussed above, the assumption that $k$ is sufficiently large is also necessary, since the dependence on $q$ is super-polynomial if $k$ is fixed and $q$ is large whenever $H$ is non-bipartite. In fact, this argument shows that the ``sufficiently large'' condition on $k$ in \cref{thm:multicolor ramsey} is nearly best possible in terms of the $q$-dependence, in that $k$ must be at least of order $q^{\Omega_H(1)}$ for such a statement such as \cref{thm:multicolor ramsey} to be true.

\subsection{Discussion and corollaries}
One interesting feature of \cref{thm:main} is that we do not assume that $n$ is sufficiently large with respect to $\gamma$, in contrast to previous results on this topic. In particular, \cref{thm:main} gives a non-trivial result even when $\gamma$ is a small negative power of $n$, as stated in the following result.
\begin{corollary}\label{cor:generalized turan}
    Let $H$ be an $h$-vertex triangle-free graph, and let $G$ be an $n$-vertex graph. If $G$ contains at least $n^{h-\alpha_H/k}$ copies of $H$, then $H[k] \subseteq G$.
\end{corollary}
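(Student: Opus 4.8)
The plan is to obtain \cref{cor:generalized turan} directly from \cref{thm:main} by choosing the density parameter to make the two hypotheses agree. I would set $\gamma := n^{-\alpha_H/k}$, so that the assumption ``$G$ contains at least $n^{h-\alpha_H/k}$ copies of $H$'' says exactly that $G$ contains at least $\gamma n^h$ copies of $H$. As long as $0 < \gamma \le \frac12$---equivalently $k \le \alpha_H\log n/\log 2$, which is the regime of interest---\cref{thm:main} applies and yields $H[k'] \subseteq G$ for some positive integer
\[
    k' \;\ge\; \alpha_H\,\frac{\log n}{\log(1/\gamma)} \;=\; \alpha_H\,\frac{\log n}{(\alpha_H/k)\,\log n} \;=\; k,
\]
the crucial step being the identity $\log(1/\gamma) = (\alpha_H/k)\log n$ and the resulting cancellation. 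Since $H[k]$ is a subgraph of $H[k']$, this gives $H[k] \subseteq G$, as desired. (Here I have renamed the blowup order to $k'$ to avoid a clash with the parameter $k$ of the corollary.)

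The only point requiring any attention is the complementary range $\gamma > \frac12$, i.e.\ $k > \alpha_H\log n/\log 2$, where \cref{thm:main} does not apply as stated. In this range $k$ is large relative to $\log n$, so $H[k]$ can embed into an $n$-vertex graph only when $hk \le n$; I would dispose of this degenerate case by hand, noting in particular that it carries no real content.

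I do not anticipate any genuine obstacle here: \cref{cor:generalized turan} is just a repackaging of \cref{thm:main} in which the multiplicative density $\gamma$ is traded for a sub-polynomial deficit in the exponent of the $H$-count. What makes this repackaging legitimate is precisely that \cref{thm:main} holds for \emph{all} $n$---with no lower bound on $n$ in terms of $\gamma$---which is exactly what lets us take $\gamma$ as small as a negative power of $n$, namely $\gamma = n^{-\alpha_H/k}$.
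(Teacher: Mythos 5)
Your derivation is the paper's one-line proof: set $\gamma = n^{-\alpha_H/k}$ and invoke \cref{thm:main}, using that it imposes no lower bound on $n$. You correctly note that the theorem only covers $\gamma\le\frac12$, i.e.\ $k\le\alpha_H\log n$, a regime the paper glosses over entirely. Your dismissal of the complementary range as carrying ``no real content'' is not automatic, though: when $\ab{\mathrm{Aut}(H)}\ge 2$ the hypothesis does force $\gamma\le\frac12$ (no $n$-vertex graph has more than $n^h/2$ copies of $H$), but if $H$ is an asymmetric triangle-free graph then $G=K_n$ can satisfy the hypothesis with $\gamma>\frac12$ and even with $hk>n$ once $n$ is astronomically large relative to $1/\alpha_H$, in which case $H[k]\not\subseteq G$. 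The corollary should therefore be read with the tacit restriction $k\lesssim\alpha_H\log n$; this is an imprecision of the paper's statement rather than of your argument, which matches the paper's exactly where it matters.
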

Indeed, \cref{cor:generalized turan} follows immediately from \cref{thm:main} by plugging in $\gamma = n^{-\alpha_H/k}$. This result can be equivalently stated in the language of \emph{generalized extremal numbers} \cite{MR3548290}, where we recall that $\ex(n, H, F)$ denotes the maximum number of copies of $H$ that can appear in an $n$-vertex $F$-free graph. In this language, \cref{cor:generalized turan} states that $$\ex(n,H,H[k]) < n^{h-\alpha_H/k}$$ for all triangle-free $H$. For general graphs $H$, the best known upper bound for this problem follows by a reduction to a hypergraph extremal problem, which yields the bound
\begin{equation}\label{eq:generalized turan}
\ex(n,H,H[k]) = O_H\left(n^{h - 1/k^{h-1}}\right)
\end{equation}
for any $h$-vertex graph $H$. Recently, several authors \cite{2402.16818,2405.07763,MR4546048} have attempted to improve this bound; in particular, the results of \cite{2402.16818,2405.07763} imply that $\ex(n,H,H[k]) = o(n^{h-1/k^{h-1}})$. However, their proof techniques rely on the (hyper)graph removal lemma, and therefore give only a very slight improvement over \eqref{eq:generalized turan}. In contrast, \cref{cor:generalized turan} gives a power-savings improvement over \eqref{eq:generalized turan} whenever $k$ is sufficiently large in terms of $H$; to the best of our knowledge, this is the first example of such a power-savings improvement for a non-bipartite graph $H$. We remark that proving an analogue of \cref{cor:generalized turan} in the first non-trivial case of $H=K_3$ would, in particular, make progress towards a conjecture of Fox, Sankar, Simkin, Tidor, and Zhou \cite[Conjecture 6.4]{2401.00359} on the extremal numbers of Latin squares.

More generally, one can let $\gamma=\gamma(n)$ decay to $0$ at some rate as $n \to \infty$, and \cref{thm:main} can still yield useful results. Results along these lines for other choices of $\gamma(n)$ have implications for certain hypergraph extremal problems. For example, \cref{thm:main} shows that if $G$ has at least $\exp(-\sqrt{\log n})n^h$ copies of a triangle-free graph $H$, then it contains a blowup of $H$ of order $\Omega(\sqrt{\log n})$. R\"odl and Schacht \cite[Problem 3]{MR2993136} showed that such a statement for $H=K_3$ would yield a result like \cref{thm:nikiforov} in hypergraphs, which remains a major open problem. Similarly, Conlon, Fox, and Sudakov \cite[Theorem 1.1]{MR2959395} proved a certain analogue of the Erd\H os--Hajnal conjecture for $3$-uniform hypergraphs, but conjectured \citetext{\citealp{{MR2959395}}, Conjecture 1; \citealp{MR3497267}, Conjecture 3.16} that their result could be quantitatively strengthened. The main barrier to improving their result is strengthening a key technical lemma \cite[Lemma 3.3]{MR2959395}, which would roughly boil down to proving that an $n$-vertex graph with $\Omega(n^3/{\log n})$ triangles contains a copy of $K_3[k]$ with $k \geq (\log n)^{1-o(1)}$. Again, \cref{thm:main} implies that such a result holds if we replace $K_3$ by any triangle-free graph.

While blowups are interesting in their own right, results about graph blowups are generally of great utility, and often immediately imply results about other graphs. For example, the following is an immediate consequence of \cref{thm:multicolor ramsey}, combined with a deep result of \L uczak \cite{MR2260851} on the \emph{homomorphism threshold} of triangle-free graphs.
\begin{corollary}\label{cor:homomorphism threshold}
    For every $\beta>0$, there exists $C_\beta>0$ such that the following holds for all $q \geq 2$ and all sufficiently large $k$. If $F$ is a $k$-vertex triangle-free graph with minimum degree at least $(\frac 13 + \beta)k$, then
    \[
        r(F;q) \leq q^{C_\beta k}.
    \]
\end{corollary}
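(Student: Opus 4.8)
The plan is to combine \cref{thm:multicolor ramsey} with the result of \L uczak \cite{MR2260851} that the homomorphism threshold of triangle-free graphs is $\frac13$. The consequence of \L uczak's theorem that we need is the following: for every $\beta>0$ there exist a finite family $\mathcal H_\beta$ of triangle-free graphs and an integer $n_0=n_0(\beta)$ such that every triangle-free graph $F$ on $n\geq n_0$ vertices with $\delta(F)\geq(\frac13+\beta)n$ admits a graph homomorphism into some member of $\mathcal H_\beta$. Fixing such a family, I would set $h_\beta:=\max_{H\in\mathcal H_\beta}\abs{V(H)}$ and $C_\beta:=\max_{H\in\mathcal H_\beta}\Lambda_H$, where $\Lambda_H$ is the constant supplied by \cref{thm:multicolor ramsey}; both depend only on $\beta$.

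The key elementary observation is that a homomorphism turns $F$ into a subgraph of a blowup. If $\varphi\colon V(F)\to V(H)$ is a homomorphism and $\abs{V(F)}=k$, then each fibre $\varphi^{-1}(u)$ has at most $k$ vertices, so it can be injected into the $u$-th part of $H[k]$; doing this over all $u$ gives an injection $V(F)\to V(H[k])$, and it is an embedding because whenever $xy\in E(F)$ the images $\varphi(x)\varphi(y)$ span an edge of $H$, so the images of $x$ and $y$ lie in completely joined parts of $H[k]$. Hence $F\subseteq H[k]$, and by monotonicity of Ramsey numbers $r(F;q)\leq r(H[k];q)$. Provided $k\geq 100^{h_\beta}q^{4h_\beta^2}$ — which, since $\abs{V(H)}\leq h_\beta$ and $m\mapsto 100^mq^{4m^2}$ is increasing, also forces $k\geq 100^{\abs{V(H)}}q^{4\abs{V(H)}^2}$ — \cref{thm:multicolor ramsey} applies to the blowup $H[k]$ and gives $r(H[k];q)\leq q^{\Lambda_H k}\leq q^{C_\beta k}$.

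Putting this together: given $\beta>0$, I would interpret ``$k$ sufficiently large'' as $k\geq\max\{n_0(\beta),\,100^{h_\beta}q^{4h_\beta^2}\}$, a threshold depending on $\beta$ and $q$ (exactly as the hypothesis $k\geq 100^hq^{4h^2}$ of \cref{thm:multicolor ramsey} does), while the exponent constant $C_\beta$ depends on $\beta$ alone. For such $k$, \L uczak's theorem yields $H\in\mathcal H_\beta$ with a homomorphism $F\to H$, and the previous paragraph gives $r(F;q)\leq q^{C_\beta k}$, as desired. I do not expect a substantive obstacle here, since both inputs are cited as black boxes; the two points that need care are (i) quoting \L uczak's theorem in precisely the ``homomorphism into a bounded triangle-free graph'' form (valid once $\abs{V(F)}$ is large), and (ii) noting that the $q$-dependence of the ``sufficiently large'' threshold is unavoidable, for the same reason recorded after \cref{thm:multicolor ramsey}.
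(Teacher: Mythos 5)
Your proposal is correct and follows essentially the same route as the paper: invoke \L uczak's theorem to map $F$ homomorphically into a fixed triangle-free graph depending only on $\beta$, observe that this makes $F$ a subgraph of the $k$-blowup of that graph, and then apply \cref{thm:multicolor ramsey}. The paper states \L uczak's result directly in the form ``$F\subseteq H_\beta[k]$'' for a single graph $H_\beta$ (the Vega graphs), whereas you phrase it via a finite family and spell out the standard ``homomorphism implies subgraph of blowup'' step; this is the same argument with more detail, and your explicit remark that the ``sufficiently large $k$'' threshold must depend on $q$ is a point the paper leaves implicit but which is indeed forced by the hypothesis $k\geq 100^h q^{4h^2}$ in \cref{thm:multicolor ramsey}.
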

Indeed, the result of \L uczak \cite{MR2260851} mentioned above states that there exists a triangle-free graph\footnote{In \cite[Corollary 4.3(3)]{brandt-thomasse}, a precise description of these graphs $H_\beta$ is given; they are the so-called \emph{Vega graphs}.} $H_\beta$, depending only on $\beta$, such that any graph $F$ satisfying the assumption of \cref{cor:homomorphism threshold} is a subgraph of $H_\beta[k]$, and thus \cref{cor:homomorphism threshold} is an immediate consequence of \cref{thm:multicolor ramsey}.

As discussed above, it is natural to conjecture that \cref{thm:main} holds for all $H$. If true, this would imply that \cref{thm:multicolor ramsey} holds for all $H$; in particular, the $H=K_h$ case of this result would imply that $r(F;q)$ is polynomial in $q$ whenever $F$ is a sufficiently large graph of bounded chromatic number. We believe that such a statement is interesting in its own right, as it shows that the bipartite behavior carries through whenever $\chi(F)$ is bounded. Moreover, proving such a statement may be easier than extending \cref{thm:main} to all $H$. Even the $H=K_3$ case seems interesting and challenging.
\begin{conjecture}
    For all $q \geq 2$ and all sufficiently large $k$, we have
    \[
        r(K_{k,k,k};q) \leq q^{Ck},
    \]
    where $C>0$ is an absolute constant.
\end{conjecture}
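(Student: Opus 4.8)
The plan is to derive the conjecture from the $H=K_3$ case of \cref{thm:main}, using the argument that deduces \cref{thm:multicolor ramsey} from \cref{thm:main}. Since $K_{k,k,k}=K_3[k]$, and since that deduction appeals to the triangle-freeness of $H$ only in order to invoke \cref{thm:main} --- the subsequent passage from the blowup statement to the multicolour Ramsey bound being oblivious to whether $H$ is triangle-free --- the reduction applies verbatim the moment \cref{thm:main} is available for $K_3$. (Indeed, the introduction already records that establishing \cref{thm:main} for all $H$ would yield \cref{thm:multicolor ramsey} for all $H$, and the conjecture is exactly the $H=K_3$ instance of the latter, with ``sufficiently large $k$'' playing the role of the hypothesis $k\ge 100^hq^{4h^2}$.) Thus the entire weight of the conjecture rests on one missing input.

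That input --- the $H=K_3$ case of \cref{thm:main} --- is, I expect, the main obstacle, and it is a well-known hard problem: one must show that an $n$-vertex graph with at least $\gamma n^3$ triangles contains $K_3[k]$ for some $k\ge\alpha\log n/\log(1/\gamma)$. As discussed in the introduction, every known proof of a Nikiforov-type theorem --- Nikiforov's original argument, the R\"odl--Schacht approach, and the dependent-random-choice argument of Fox--Luo--Wigderson --- loses a polynomial factor in $\gamma$, because assembling the three pairwise-complete parts of $K_3[k]$ seems to require a ``common neighbourhood'' step that costs $\gamma^{\Omega(1)}$ in the part sizes. The structural leverage used in the present paper --- that a triangle-free $H$ is either bipartite or has odd girth at least $5$, so that a blowup can essentially be read off inside a single neighbourhood --- has no counterpart for $K_3$, so overcoming the polynomial-in-$\gamma$ barrier here appears to need a genuinely new idea.

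One might instead attempt a direct attack on the Ramsey statement that avoids \cref{thm:main} for $K_3$, as the introduction suggests may be possible. The natural template is to use the well-understood bound $r(K_{k,k};q)\le q^{O(k)}$ to locate a monochromatic $K_{k,k}$ in some colour $i$, and then to extend it to $K_{k,k,k}$ by finding $k$ further vertices joined in colour $i$ to all of it --- that is, by finding a second $K_{k,k}$, together with a suitable independent set, inside the colour-$i$ common neighbourhood of the first. The difficulty is that a dense colour class may be bipartite, in which case a $K_{k,k}$ straddling its two sides has empty common neighbourhood (and a bipartite graph contains no $K_{k,k,k}$ at all); so one must produce the initial $K_{k,k}$ \emph{robustly}, with many colour-$i$ common neighbours, and controlling that robustness is again essentially a $K_3$-flavoured instance of the Nikiforov problem. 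Whichever route is taken, then, I expect the crux to be the same: breaking the polynomial-in-$\gamma$ barrier for triangles. Should this be done, the resulting bound $q^{Ck}$ would be optimal up to the value of $C$, as the random colouring shows $r(K_{k,k,k};q)\ge q^{ck}$.
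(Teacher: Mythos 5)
The statement you were asked to address is not a theorem but a \emph{conjecture}; the paper offers no proof of it, so there is nothing of the paper's to compare your argument against. Your ``proposal'' is in fact a correct assessment of the state of the problem rather than a proof, and you correctly identify it as such. Your reduction is sound: the derivation of \cref{thm:multicolor ramsey} from \cref{thm:main} (via \cref{lem:monochromatic inflation}, which already works for $K_h$ rather than for triangle-free $H$) never uses triangle-freeness except to invoke \cref{thm:nikiforov partite}, so the conjecture would indeed follow immediately from the $K_3$ case of \cref{thm:main}. You also correctly note the paper's own remark that a direct Ramsey attack may be easier than establishing $K_3$-Nikiforov, and your sketch of why the naive $K_{k,k}\to K_{k,k,k}$ extension fails (a dense bipartite colour class has $K_{k,k}$'s with empty common neighbourhood) is a fair account of the obstruction.

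One small correction to your description of where triangle-freeness enters the paper's argument: it is not that ``a triangle-free $H$ is either bipartite or has odd girth at least $5$, so that a blowup can essentially be read off inside a single neighbourhood.'' The paper never splits into those cases. What is actually used (inside \cref{lem:induct}) is that $N_H(h)$ is an \emph{independent set} for any vertex $h$ of a triangle-free $H$. This is what makes the common neighbourhood $A'\subseteq V_1\times\cdots\times V_t$ of the sampled vertices $b_1,\dots,b_s$ in the auxiliary bipartite graph $\Gamma$ a \emph{product set} $S_1\times\cdots\times S_t$: membership of $(v_1,\dots,v_t)$ in $A'$ decouples into $t$ independent conditions, one per coordinate, because no $v_i,v_j$ with $i,j\in N_H(h)$ need to be adjacent. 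For $H=K_3$ the neighbourhood of a vertex is an edge, not an independent set, so the product structure --- and with it the ability to lose only polynomially in $\gamma$ --- disappears. Your broader conclusion that this is exactly where the argument breaks for $K_3$, and that the polynomial-in-$\gamma$ barrier is the crux, is correct.
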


\subsection{Organization}
We provide high-level proof sketches of our main results in \cref{sec:outlines}.
We then proceed to prove \cref{thm:main} in \cref{sec:nikiforov}, and \cref{thm:multicolor ramsey} in \cref{sec:ramsey}. We end in \cref{sec:conclusion} with some concluding remarks.

All logarithms in this paper are to base $2$. We systematically omit floor and ceiling signs whenever they are not crucial.

\section{Proof outlines}\label{sec:outlines}
Although the proofs of \cref{thm:main,thm:multicolor ramsey} are fairly short, we now give an outline of their proofs, beginning with \cref{thm:main}. 
\subsection{Proof sketch of Theorem \ref{thm:main}}
Let us fix a triangle-free graph $H$ with vertex set $\{u_1,\dots,u_h\}$, and an $n$-vertex graph $G$ with at least $\gamma n^h$ copies of $H$. By a standard averaging argument, we can pass to a partite setting, namely finding disjoint sets $V_1,\dots,V_h \subseteq V(G)$ containing at least $\gamma \prod_{i=1}^h \ab{V_i}$ \emph{canonical} copies of $H$, where a copy is canonical if the $i$th vertex of $H$ lies in $V_i$ for all $i \in [h]$.

Without loss of generality, let us assume that the neighbors of vertex $u_h$ in $H$ are vertices $u_1,u_2,\dots,u_t$, for some $t<h$. We construct an auxiliary bipartite graph $\Gamma = (A,B,E)$, where $A = V_1\times \dots\times V_t$ and $B = V_{t+1}\times\dots \times V_h$, in which $(v_1,\dots,v_t)\sim_\Gamma (v_{t+1},\dots,v_h)$ if $(v_1,\dots,v_h)$ form a canonical copy of $H$. The crucial property of $\Gamma$, and the only place in the proof where we use the triangle-freeness of $H$, is the following: for every $(v_{t+1},\dots,v_h) \in B$, its neighborhood in $\Gamma$ is of the form $S_1 \times \dots \times S_t \subseteq A$, for some sets $S_i \subseteq V_i$. In other words, neighborhoods of vertices in $B$ are not arbitrary subsets of $A$, but rather highly structured product sets, for the following reason. For a given vertex $(v_{t+1},\dots,v_h) \in B$, if its components do not form a copy of $H[\{u_{t+1},\dots,u_h\}]$, then it has no neighbors in $A$. In the other case, by definition, a vertex $(v_1,\dots,v_t) \in A$ is a neighbor of $(v_{t+1},\dots,v_h) \in B$ if and only if, for every $i$, the vertex $v_i$ is adjacent in $G$ to all $v_j$ with $u_iu_j \in E(H)$. However, since $H$ is triangle-free, the set $\{u_1,\dots,u_t\} \subseteq V(H)$ is an independent set, hence for any $i\in [t]$ the set of such $j$ is contained in $\{t+1,\dots,h\}$. In other words, the condition that $(v_1,\dots,v_t)$ is a neighbor of $(v_{t+1},\dots,v_h)$ is simply the conjunction of $t$ different conditions, one for each $i \in [t]$, and there is no interaction between these conditions. This proves that the neighborhood of $(v_{t+1},\dots,v_h)$ does indeed have the claimed product structure. As an immediate consequence, we note that the common neighborhood of any $B' \subseteq B$ also has a product structure, since the intersection of product sets is another product set.

Now, we note that since $G$ contains many canonical copies of $H$, the graph $\Gamma$ must be dense; in fact, it has at least $\gamma \ab A \ab B$ edges. We now apply the dependent random choice technique to this graph\footnote{We refer to the survey \cite{MR2768884} for an introduction to this powerful technique. For this high-level proof overview, we assume some familiarity with the technique, and defer a detailed discussion to the full proof of \cref{thm:main}.}, by choosing a set $B^*\subseteq B$ of $s = \Theta(\log n/{\log (1/\gamma)})$ random vertices from $B$. One can check that with positive probability, the following three ``good events'' occur:
\begin{itemize}
    \item letting $A'\subseteq A$ be the common neighborhood of $B^*$, we have that $\ab{A'} \geq \ab A/{\sqrt n}$;
    \item there are at least $\gamma^C \ab{A'} \ab B$ edges of $\Gamma$ between $A'$ and $B$, for some constant $C>0$;
    \item and the vertices in $B^*$, which are elements of $V_{t+1}\times \dots \times V_h$, have at least $s/2$ distinct final coordinates.
\end{itemize}
Unpacking what this all means, and recalling the product structure of $A'$, we see that we can find the following structure. There is a set $V_h^* \subseteq V_h$, of size at least $s/2 = \Omega(\log n/{\log(1/\gamma)})$, and sets $V_1' \subseteq V_1,\dots,V_t' \subseteq V_t$, each of size at least $\sqrt n$, such that $V_h^*$ is complete to each $V_i'$, and the number of canonical copies of $H' \coloneqq H\setminus \{u_h\}$ among $V_1',\dots,V_t',V_{t+1},\dots,V_{h-1}$ is at least $\gamma^C \ab{V_1'}\dotsb \ab{V_t'}\ab{V_{t+1}}\dotsb\ab{V_{h-1}}$. If we can find a blowup $H'[k]$, for $k = \Omega(\log n/{\log(1/\gamma)})$ among these $h-1$ parts, we can combine it with $V_h^*$ to obtain the desired blowup $H[k]$. And crucially, we are now in a position to proceed by induction: although we have lost a great deal in the number of copies (from $\gamma$ to $\gamma^C$) and have lost many vertices (e.g.\ from $\ab{V_1}=n$ to $\ab{V_1'}=\sqrt n$), these losses are only \emph{polynomial}. Since we are aiming for a bound of the form $\Omega_H(\log n/{\log(1/\gamma)})$, these polynomial losses will be converted by the logarithms to linear losses, which can be absorbed into the constant factor. 

\subsection{Proof sketch of Theorem \ref{thm:multicolor ramsey}}
Thanks to \cref{thm:main}, in order to prove \cref{thm:multicolor ramsey}, it suffices to show that every $q$-coloring of $E(K_N)$, for an appropriately chosen $N$, contains many monochromatic copies of $H$ in some color. 
A simple averaging argument, due to Erd\H os \cite{MR0151956}, shows that such a statement is true, namely that for every $H,q$, there exists some $c_{H,q}>0$ such that every $q$-coloring of $E(K_N)$ contains at least $(c_{H,q}-o(1))N^h$ monochromatic copies of $H$, where $H$ has $h$ vertices and the $o(1)$ term tends to $0$ as $N \to \infty$. The quantity $c_{H,q}$ is called the \emph{$q$-color Ramsey multiplicity constant} of $H$; for more on this topic see \cite[Section 2.6]{MR3497267}, as well as \cite{2206.05800,MR4618272,MR4426765} for more recent developments. 

Unfortunately, this result is too weak quantitatively to directly plug into \cref{thm:main}. Indeed, one can show\footnote{We can $q$-color $E(K_N)$ by blowing up a $(q-1)$-coloring on $2^{q-1}$ vertices in which every color class is bipartite, and using the $q$th color for all edges inside a part of the blowup. As $H$ is connected and non-bipartite, the only monochromatic copies of $H$ in this coloring lie inside one of these parts, showing that $c_{H,q} \leq 2^{-(q-1)(h-1)}$.} that 
if $H$ is connected and non-bipartite, then $c_{H,q} \leq 2^{-(q-1)(h-1)} =2^{-\Omega_H(q)}$. Therefore, if we simply plug this into \cref{thm:main}, the best bound we could hope to prove is of the form $r(H[k];q) \leq 2^{O_H(qk)}$.

So rather than directly apply this Ramsey multiplicity argument, we first pass to an appropriate sub-configuration of the coloring. Namely, we show that in every $q$-coloring of $E(K_N)$, there is a set\footnote{In the formal proof, it is more convenient to work in a partite setting as above, so we actaully prove the existence of large sets $V_1,\dots,V_h$ containing many canonical copies of $H$.} $V \subseteq V(K_N)$ with $\ab V = \Omega_H(N)$, which contains $q^{-O_H(1)} \ab V^h$ monochromatic copies of $H$ in some color, say red. In other words, by slightly shrinking the vertex set, we are able to boost the Ramsey multiplicity from exponentially to polynomially small in $q$. At this point, we can apply \cref{thm:main} to the red graph on vertex set $V$ and find a red copy of $H[k]$.

Proving the existence of such a set $V$ is fairly straightforward using Szemer\'edi's regularity lemma (see \cref{sec:ramsey}). However, we give an alternative proof, based on a direct density-increment argument, that is more quantitatively efficient; in particular, it is this extra efficiency that allows us to assume that $k$ is only polynomially large with respect to $q$ in \cref{thm:multicolor ramsey}.

\section{Proof of Theorem \ref{thm:main}}\label{sec:nikiforov}
\begin{definition}
    Let $G$ be a graph, let $H$ be a graph with vertex set $\{u_1,\dots,u_h\}$, and let $V_1,\dots,V_h$ be disjoint subsets of $V(G)$. We say that the tuple $(V_1,\dots,V_h)$ is a \emph{$\gamma$-inflation of $H$} if $(V_1,\dots,V_h)$ contains at least $\gamma \prod_{i=1}^h \ab{V_i}$ canonical copies of $H$.

    In case $\ab{V_i}=n$ for all $i$, we say that this tuple is a $(\gamma,n)$-inflation of $H$. If, instead, we only have that $\ab{V_i} \geq n$ for all $i$, we say it is a $(\gamma,{\geq n})$-inflation of $H$.
\end{definition}

We prove \cref{thm:main} by induction on $h$; the following lemma gives the inductive step, and \cref{thm:main} follows from it fairly straightforwardly. We denote by $N_H(u),d_H(u)$ the neighborhood and degree, respectively, of a vertex $u \in V(H)$; we omit the $H$ subscripts when the graph is clear from context. Also, given a graph $G$ and two disjoint vertex subsets $U,W \subseteq V(G)$, we denote by $G[U,W]$ the bipartite subgraph of $G$ induced on the pair $(U,W)$. 

\begin{lemma}\label{lem:induct}
    Let $H$ be graph with vertex set $\{u_1,\dots,u_h\}$, and suppose that there are no triangles in $H$ containing vertex $u_h$ (or equivalently, that $N_H(u_h)$ is an independent set). There exists a constant $C_H=\max \{1,8d_H(u_h)\}\geq 1$ such that the following holds for all $0<\gamma \leq \frac 12$ and $n \geq \gamma^{-16h}$.
    
    Let $G$ be a graph, and let $V_1,\dots,V_h \subseteq V(G)$ form a $(\gamma,\geq n)$-inflation of $H$.
    Then there exist $V_h^*\subseteq V_h$ and $V_i'\subseteq V_i$ for $i\in [h-1]$ so that:
    \begin{enumerate}
        \item $|V_h^*|\ge \frac{\log(n)}{8\log(1/\gamma)}$;
        \item $G[V_j',V_h^*]$ is a complete bipartite graph for all $j$ such that $u_j\in N_H(u_h)$;
        \item $(V_1',\dots,V_{h-1}')$ forms a $(\gamma',\geq n')$-inflation of $H \setminus \{u_h\}$, where $n' =\sqrt n$ and $\gamma' = \gamma^{C_H}$.
    \end{enumerate}
\end{lemma}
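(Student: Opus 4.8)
The proof follows the outline given in the introduction: we set up the auxiliary bipartite graph $\Gamma = (A, B, E)$ and run dependent random choice on it, exploiting the product structure of neighborhoods of $B$-vertices that comes from the triangle-freeness hypothesis. Write $t = d_H(h)$ and, relabeling if necessary, assume $N_H(h) = \{1, \dots, t\}$. Pass first to exact sizes: by discarding vertices we may assume $|V_i| = n$ for each $i$ (this only helps, since the density of canonical copies can only go up, or we can just work with the given sets and track $|V_i| \ge n$). Set $A = V_1 \times \cdots \times V_t$ and $B = V_{t+1} \times \cdots \times V_h$, and join $(v_1,\dots,v_t) \sim_\Gamma (v_{t+1},\dots,v_h)$ iff $(v_1,\dots,v_h)$ is a canonical copy of $H$. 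The inflation hypothesis says $e(\Gamma) \ge \gamma |A||B|$. The key structural fact, proved in the outline, is that for each $b \in B$, $N_\Gamma(b) = S_1^b \times \cdots \times S_t^b$ for some $S_i^b \subseteq V_i$; hence for any $B^* \subseteq B$, the common neighborhood $\bigcap_{b \in B^*} N_\Gamma(b)$ is again a product set $\prod_i (\bigcap_{b} S_i^b)$.

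**The dependent random choice step.** Pick $s := \lceil \log(n) / (8 \log(1/\gamma)) \rceil$ vertices $b_1, \dots, b_s$ from $B$ uniformly and independently (with repetition), and let $B^* = \{b_1, \dots, b_s\}$ and $A' = \bigcap_{j} N_\Gamma(b_j)$. The three good events to control are: (i) $|A'| \ge |A|/\sqrt{n}$; (ii) the number of $\Gamma$-edges between $A'$ and $B$ is at least $\gamma^{C} |A'||B|$ for a suitable $C$; (iii) the multiset $\{b_1, \dots, b_s\}$ has at least $s/2$ distinct values in the last coordinate $V_h$. For (i): $\E|A'| = \sum_{a \in A} (d_\Gamma(a)/|B|)^s \ge |A| (e(\Gamma)/(|A||B|))^s \ge |A|\gamma^s$ by convexity, and $\gamma^s \ge \gamma^{\log n/(8\log(1/\gamma))} \cdot \gamma^{O(1)} \ge n^{-1/8}\cdot\gamma^{O(1)} \ge n^{-1/4}$ for $n$ large (here we use $n \ge \gamma^{-16h}$, so $\gamma^{O(1)}$ is at most a small power of $n$); so in fact $\E|A'|$ is polynomially large, and a first-moment / Markov argument on the ``bad'' quantity handles (i). For (ii), use the standard DRC trick: the expected number of ``bad pairs'' $(a, b)$ with $a \in A$, $b \in B$, $ab \notin E(\Gamma)$, but $a$ still landing in $A'$ is $\sum_{ab \notin E} (\text{codeg}_\Gamma(a,b')\text{-type quantity})$ — more precisely bound $\E[\#\{b \in B : e_\Gamma(A', \{b\}) < \gamma^{C}|A'|\}\cdot(\text{something})]$; choosing the exponent $C = C_H = \max\{1, 8t\}$ makes the expected count of such bad configurations less than $\tfrac14 |A| \gamma^s$ or so, so it can be absorbed. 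For (iii): the number of distinct last coordinates is a function of $s$ i.i.d.\ samples from the distribution of the $V_h$-coordinate of a uniform $B$-vertex; since each $b_j$ contributes a new last coordinate with reasonable probability unless one coordinate value is very heavy — but if some $w \in V_h$ had $\Gamma$-degree so concentrated, we could instead restrict, so generically (iii) holds with probability close to $1$; a short concentration/counting argument, or a direct union bound over ``the last coordinates all lie in a set of size $< s/2$,'' closes it. Combining, all three events hold simultaneously with positive probability.

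**Extracting the conclusion.** Fix an outcome where (i)–(iii) hold. Write $A' = V_1' \times \cdots \times V_t'$ with $V_i' = \bigcap_{j} S_i^{b_j} \subseteq V_i$. From (i), $|V_1'| \cdots |V_t'| \ge n^t / \sqrt n \ge n'^t \cdots$ — wait, we need each $|V_i'| \ge \sqrt n$ individually; this does not follow from the product bound alone, so here one must be slightly more careful: actually apply DRC so that event (i) is replaced by ``$|V_i'| \ge \sqrt n$ for every $i \in [t]$'' simultaneously, which costs only a union bound over the $t = O_H(1)$ coordinates and a slightly larger exponent in the $n^{-1/c}$ estimate — still polynomial, still absorbable, and this is exactly why $n \ge \gamma^{-16h}$ with room to spare is assumed. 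Let $V_h^* \subseteq V_h$ be the set of distinct last coordinates of $B^*$; by (iii), $|V_h^*| \ge s/2 \ge \log(n)/(8\log(1/\gamma))$, giving conclusion (1). By construction every $b_j \in B^*$ is $\Gamma$-adjacent to all of $A'$, which unpacks to: every $w \in V_h^*$ is adjacent in $G$ to every vertex of $V_i'$ for each $i \in [t] = N_H(h)$ (we also need $w$'s original tuple to witness $H\setminus\{h\}$-structure, but adjacency to the $V_i'$'s is what matters for condition (2)), giving conclusion (2). For conclusion (3): restrict the $\Gamma$-edges to $A' \times B$; by (ii) there are $\ge \gamma^{C_H}|A'||B|$ of them, and each such edge $(v_1,\dots,v_t)(v_{t+1},\dots,v_h)$ with $(v_1,\dots,v_t) \in A'$ gives a canonical copy of $H$ in $V_1',\dots,V_t',V_{t+1},\dots,V_h$; projecting away the last coordinate, these canonical copies of $H$ restrict to canonical copies of $H \setminus \{h\}$ in $(V_1',\dots,V_t',V_{t+1},\dots,V_{h-1})$, and counting shows there are at least $\gamma^{C_H}|V_1'|\cdots|V_{h-1}'|$ of them — so $(V_1',\dots,V_{h-1}')$ is a $(\gamma^{C_H}, \ge \sqrt n)$-inflation of $H\setminus\{h\}$, as required.

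**Main obstacle.** The delicate point is the simultaneous control of all three events with the right quantitative dependence — in particular keeping all losses polynomial in $n$ while the exponent $C_H$ depends only on $d_H(h)$ (not on $h$ or $\gamma$), and arranging that each individual set $V_i'$ (not just their product) is large. This forces a careful choice of the DRC parameters and is the reason for the precise hypothesis $n \ge \gamma^{-16h}$; the rest is bookkeeping with convexity (Jensen) and Markov's inequality.
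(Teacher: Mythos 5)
Your proposal follows the same overall route as the paper: the auxiliary bipartite graph $\Gamma = (A,B,E)$, the product structure of common neighborhoods coming from the independence of $N_H(h)$, dependent random choice to control the three good events, and projection onto $H\setminus\{h\}$ at the end. However, two concrete points in your execution are wrong or weaker than needed.

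First, your parameter choice $s = \lceil \log(n)/(8\log(1/\gamma)) \rceil$ cannot yield conclusion (1): you need $|V_h^*| \geq s/2 \geq \log(n)/(8\log(1/\gamma))$, but with your $s$ you only get $s/2 \geq \log(n)/(16\log(1/\gamma))$, which is off by a factor of $2$. The paper instead samples $s = \lceil \log(n)/(4\log(1/\gamma)) \rceil$ vertices, which gives the stated bound on $|V_h^*|$ and also makes the quantitative bookkeeping cleaner: one then has $n^{-1/3} \leq \gamma^s \leq n^{-1/4}$ (using the hypothesis $n\ge \gamma^{-16h}$ to control the ceiling), so that $\E[X]\ge n^{t-1/3}$ while both the bad-degree count $Y$ and the repeated-last-coordinate indicator $Z$ have expectation at most $n^{-t}$, letting one find an outcome with $Y=Z=0$ and $X\ge n^{t-1/2}$. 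Your bound of the form $\E[Y] < \tfrac14|A|\gamma^s$ is weaker and would only certify that \emph{most} of $A'$ has high degree, which loses a constant factor in the density and does not give exactly a $\gamma'$-inflation as required.

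Second, and more importantly, you incorrectly claim that the lower bound on each individual $|V_i'|$ ``does not follow from the product bound alone'' and then propose a vaguer workaround (running DRC coordinate-by-coordinate). In fact it does follow immediately: since $A' = S_1\times\cdots\times S_t$ with each $S_i\subseteq V_i$ of size at most $n$, and $|A'|\ge n^{t-1/2}$, we get
\[
|S_i| \;\ge\; \frac{|A'|}{\prod_{j\neq i}|S_j|} \;\ge\; \frac{|A'|}{n^{t-1}} \;\ge\; \sqrt{n}
\]
for every $i\in[t]$. This single-line observation is what the paper uses; it is exactly the reason that bounding only the product $X=|A'|$ in the DRC step suffices, and your proposed alternative of enforcing each coordinate separately is neither necessary nor clearly made precise. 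Aside from these two points, the remaining steps (the subsampling to equal part sizes, the $Z$ bound, and the projection argument that loses a factor $|V_h|$ harmlessly) match the paper's proof.
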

We remark that the choice of $n' = \sqrt n$ is not particularly important; any bound of the form $n' = n^{\Omega_H(1)}$ would suffice to prove \cref{thm:main}.

Before proving \cref{lem:induct}, we first show how it implies \cref{thm:main}. We actually state and prove a partite version of this result; \cref{thm:main} follows from this partite version by a standard random partitioning argument, which we include for completeness after the proof of \cref{thm:nikiforov partite}.
\begin{theorem}\label{thm:nikiforov partite}
    Let $H$ be a triangle-free graph. There exists a constant $\beta_H>0$ such that the following holds for all $0<\gamma\leq \frac 12$ and all $n$. 

    Let $G$ be a graph, and let $V_1,\dots,V_h \subseteq V(G)$ form a $(\gamma,\geq n)$-inflation of $H$. Then $H[k] \subseteq G$, where $k \geq \beta_H \frac{\log(n)}{\log(1/\gamma)}$. In fact, there exist $W_i \subseteq V_i$, where $\ab{W_i}=k$, which form a copy of $H[k]$.
\end{theorem}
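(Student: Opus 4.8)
The base case is $h=1$: a $(\gamma,\geq n)$-inflation of the single-vertex graph $H$ is just a set $V_1$ with $|V_1| \geq n$ (the ``canonical copies'' condition is vacuous for any $\gamma \leq 1$), and it trivially contains a copy of $H[k]$ for any $k \leq n$; since we only need $k \geq \beta_H \log(n)/\log(1/\gamma)$, taking $\beta_H$ small (say $\beta_H \leq 1$) handles this case, as $\log(n)/\log(1/\gamma) \leq \log n \leq n$. (One should also dispose of the trivial regime where $n$ is bounded in terms of $\gamma$, e.g.\ $n < \gamma^{-16h}$, by choosing $\beta_H$ small enough that the claimed bound on $k$ is less than $1$, so the statement is vacuous; this is exactly why Lemma~\ref{lem:induct} is only stated for $n \geq \gamma^{-16h}$.)

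\textbf{Inductive step.} Suppose $h \geq 2$ and the theorem holds for all triangle-free graphs on $h-1$ vertices, with constant $\beta_{H'}$ for each such $H'$. Given a triangle-free $H$ on $[h]$ and a $(\gamma,\geq n)$-inflation $(V_1,\dots,V_h)$ of $H$, relabel vertices if necessary so that $h$ is whichever vertex we wish to peel off; since $H$ is triangle-free, $N_H(h)$ is automatically independent, so Lemma~\ref{lem:induct} applies. It produces $V_h^* \subseteq V_h$ with $|V_h^*| \geq \log(n)/(8\log(1/\gamma))$, sets $V_i' \subseteq V_i$ for $i \in [h-1]$ with $V_j'$ complete to $V_h^*$ for all $j \in N_H(h)$, and the property that $(V_1',\dots,V_{h-1}')$ is a $(\gamma', \geq n')$-inflation of $H' := H \setminus \{h\}$ with $n' = \sqrt n$ and $\gamma' = \gamma^{C_H}$. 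Note $H'$ is triangle-free (as an induced subgraph of $H$). Apply the inductive hypothesis to $(V_1',\dots,V_{h-1}')$: this yields sets $W_i \subseteq V_i'$ for $i \in [h-1]$, each of size exactly $k'$, forming a copy of $H'[k']$ inside $G$, where
\[
    k' \geq \beta_{H'} \frac{\log(n')}{\log(1/\gamma')} = \beta_{H'} \frac{\tfrac12 \log n}{C_H \log(1/\gamma)} = \frac{\beta_{H'}}{2C_H} \cdot \frac{\log n}{\log(1/\gamma)}.
\]
Now set $k := \min\{k', |V_h^*|\}$ and let $W_h$ be any $k$-subset of $V_h^*$, and shrink each $W_i$ ($i \in [h-1]$) to a $k$-subset. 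Because $W_j \subseteq V_j'$ is complete to $V_h^* \supseteq W_h$ for every $j \in N_H(h)$, and the $W_i$ with $i \in [h-1]$ already form $H'[k] \subseteq H'[k']$, the sets $W_1,\dots,W_h$ form a copy of $H[k]$: the edges among the first $h-1$ parts are provided by $H'[k']$, and the edges from part $h$ to part $j$ (for $j \in N_H(h)$) form a complete bipartite graph as required, while there are no edges to parts $j \notin N_H(h) \cup \{h\}$. Finally,
\[
    k \geq \min\left\{ \frac{\beta_{H'}}{2C_H}, \frac{1}{8} \right\} \cdot \frac{\log n}{\log(1/\gamma)},
\]
so taking $\beta_H := \min\{\beta_{H'} : H' = H \setminus \{v\},\ v \in V(H)\}/(2C_H)$ together with the $\tfrac18$ term, and noting $\beta_H$ depends only on $H$, completes the induction. (One can collapse the ``$\min$ over $v$'' by just picking one vertex $h$ to peel, but quoting it for all $v$ costs nothing and is cleaner.)

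\textbf{Expected main obstacle.} The genuinely substantive content — the density increment via dependent random choice exploiting the product structure of $B$-neighborhoods — is entirely packaged inside Lemma~\ref{lem:induct}, which we are permitted to assume. So the derivation of Theorem~\ref{thm:nikiforov partite} is essentially bookkeeping; the only points requiring mild care are (i) checking the hypotheses of Lemma~\ref{lem:induct} hold at each step, in particular handling the range $n < \gamma^{-16h}$ separately by making the conclusion vacuous, and (ii) tracking that the constant $\beta_H$ produced by the recursion stays bounded away from $0$ — which it does, since the recursion has depth at most $h$ and at each step $\beta$ is multiplied by a factor $1/(2C_H) \geq 1/(16 d_H(h))$ depending only on $H$. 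Finally, Theorem~\ref{thm:main} itself follows from Theorem~\ref{thm:nikiforov partite} by the standard random partitioning argument: given $G$ on $n$ vertices with $\geq \gamma n^h$ copies of $H$, randomly $h$-color $V(G)$ and keep only canonical copies; in expectation this retains a $\frac{h!}{h^h} \geq h^{-h}$ fraction of all labelled copies (summing over the $|\mathrm{Aut}(H)|$ relabellings), so some coloring yields parts $V_1,\dots,V_h$ of sizes $\approx n/h$ forming a $(\Omega_h(\gamma), \geq n/(2h))$-inflation, and then Theorem~\ref{thm:nikiforov partite} applies with $\log(n/(2h))/\log(1/\Omega_h(\gamma)) = \Omega_H(\log n/\log(1/\gamma))$, absorbing the $h$-dependent losses into $\alpha_H$.
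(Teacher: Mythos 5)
Your proof is correct and follows essentially the same route as the paper: induction on $h$, peeling a vertex via Lemma~\ref{lem:induct}, applying the inductive hypothesis, and taking $k = \min\{k', |V_h^*|\}$. The only minor slip is that your final $\beta_H = \min\{\beta_{H'}/(2C_H),\tfrac18\}$ isn't necessarily small enough to make the regime $n < \gamma^{-16h}$ trivial — for that you need $\beta_H \leq 1/(16h)$ (the paper folds this directly into the min) — but you acknowledge this issue in passing, and it is a cosmetic fix; note also that the small-$n$ case is \emph{trivial} (take $k=1$ and any canonical copy of $H$) rather than vacuous.
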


\begin{proof}
    We prove the theorem by induction on $h\coloneqq\ab{V(H)}$, with the base case $h=1$ vacuously true.

    Inductively, suppose we have proved the statement for all triangle-free graphs $H'$ on $h-1$ vertices, and fix a triangle-free graph $H$ on vertex set $\{u_1,\dots,u_h\}$. Let $H' = H \setminus \{u_h\}$ and
    \[
    \beta_H = \min \left\{ \frac{1}{16h},  \frac{\beta_{H'}}{2C_H}\right\},
    \]
    where $C_H$ is the constant from \cref{lem:induct}.
    We claim that this value of $\beta_H$ suffices. 

    Suppose first that $n < \gamma^{-16h}$. In this case we have that
    \[
    \beta_H \frac{\log(n)}{\log(1/\gamma)} < \beta_H \frac{16h\log(1/\gamma)}{\log (1/\gamma)} =16h\beta_H \leq 1,
    \]
    and thus the statement is trivial since we may pick an arbitrary canonical copy of $H$, and set $W_i$ to be a singleton containing its $i$th vertex. Therefore, we may assume henceforth that $n \geq \gamma^{-16h}$. Let $n' = \sqrt n$ and $\gamma' = \gamma^{C_H}$.
    
    Let $V_1,\dots,V_h$ form a $(\gamma,\geq n)$-inflation of $H$. By \cref{lem:induct}, we may find $V_h^* \subseteq V_h$ and $V_i' \subseteq V_i$ for $i \in [h-1]$ such that $\ab{V_h^*} \geq  \frac{\log(n)}{8\log(1/\gamma)}$, $G[V_j', V_h^*]$ is complete for all $u_j \in N_H(u_h)$, and  $(V_1',\dots,V_{h-1}')$ forms a $(\gamma',\geq n')$-inflation of $H'$. By the inductive hypothesis, we may now find $W_i' \subseteq V_i'$, for all $i \in [h-1]$, which form a copy of $H'[k']$, where
    \[
    k' \geq \beta_{H'} \frac{\log(n')}{\log(1/\gamma')} = \beta_{H'} \frac{\log(\sqrt n)}{\log(1/\gamma^{C_H})} = \frac{\beta_{H'}}{2C_H} \cdot \frac{\log(n)}{\log(1/\gamma)} \geq \beta_H \frac{\log(n)}{\log(1/\gamma)}=k,
    \]
    where the final inequality holds by our definition of $\beta_H$. Moreover, we have that
    \[
    \ab{V_h^*} \geq  \frac{\log(n)}{8\log(1/\gamma)} \geq \beta_H \frac{\log(n)}{\log(1/\gamma)}=k,
    \]
    again by the choice of $\beta_H$. Therefore, if we pick arbitrary subsets $W_1 \subseteq W_1', \dots,W_{h-1} \subseteq W_{h-1}', W_h \subseteq V_h^*$, each of order $k$, we have found the claimed canonical copy of $H[k]$. This completes the proof of the inductive step.
\end{proof}
Tracing through the quantitative dependencies in this proof, it is straightforward to verify inductively that this proof demonstrates $\beta_H \geq (100 h)^{-h}$. In fact, if $H$ is $d$-degenerate, then by applying the induction according to the degenerate ordering, one can improve this bound to $\beta_H \geq (100d)^{-h}$.
\begin{remark}
    Using the same proof, it is not hard to show the following strengthening of \cref{thm:nikiforov partite}. Fix an independent set $I \subseteq V(H)$, and suppose we are again in the setting of \cref{thm:nikiforov partite}. Then we may again find $W_i \subseteq V_i$ forming a blowup of $H$, where $\ab{W_i} \geq \beta_H \frac{\log(n)}{\log(1/\gamma)}$ for all $i$ with $u_i \notin I$, and $\ab{W_i}\geq n^{\sigma_H}$ for all $i$ with $u_i \in I$, for some $\sigma_H>0$. That is, we may ensure that in this blowup, the sets corresponding to vertices in $I$ have polynomial, rather than logarithmic, size. In fact, by appropriately modifying \cref{lem:induct}, one can even take $\sigma_H$ arbitrarily close to $1$, at the expense of obtaining a worse bound on $\beta_H$. We leave the details to the interested reader.
\end{remark}

For completeness, we include the simple derivation of \cref{thm:main} from \cref{thm:nikiforov partite}.
\begin{proof}[Proof of \cref{thm:main}]
    Let $\alpha_H = \min \{\beta_H/h, 1/(2\log h)\}$, where $\beta_H$ is the constant from \cref{thm:nikiforov partite}. Let $G$ be an $n$-vertex graph with at least $\gamma n^h$ copies of $H$.

    First note that if $n <h^2$, we have that
    \[
    \alpha_H \frac{\log(n)}{\log(1/\gamma)} < \alpha_H \log(h^2)= 2\alpha_H \log(h) \leq 1,
    \]
    where we use the assumption $\gamma \leq \frac 12$ and our choice of $\alpha_H \leq 1/(2\log h)$. In this case the result is trivial, since it suffices to return a copy of $H=H[1]$ in $G$, which exists by assumption. We may therefore assume henceforth that $n \geq h^2$.
    
    We randomly partition $V(G)$ into $h$ equally-sized parts $V_1,\dots,V_h$, where the partition is chosen uniformly at random among all equitable partitions. Note that each labeled copy of $H$ in $G$ becomes a canonical copy among $V_1,\dots,V_h$ with probability at least $h^{-h}$. Therefore, by linearity of expectation and the assumption that $G$ contains at least $\gamma n^h$ copies of $H$ (and in particular at least $\gamma n^h$ \emph{labeled} copies of $H$), we conclude that there exists a choice of $V_1,\dots,V_h$ such that the number of canonical copies of $H$ is at least
    \[
        \frac{\gamma n^h}{h^h} = \gamma \left( \frac nh \right)^h=\gamma \prod_{i=1}^h \ab{V_i}.
    \]
    We fix such a choice of $V_1,\dots,V_h$. This is a $(\gamma,n/h)$-inflation of $H$ in $G$. By \cref{thm:nikiforov partite}, we conclude that $G$ contains a copy of $H[k]$, where
    \[
    k \geq \beta_H \frac{\log(n/h)}{\log(1/\gamma)} \geq \beta_H \frac{\log(n)/h}{\log(1/\gamma)} \geq \alpha_H \frac{\log(n)}{\log(1/\gamma)},
    \]
    where the second inequality holds since $n \geq h^2$.
\end{proof}
The rest of this section is dedicated to proving \cref{lem:induct}. 
We begin with a simple subsampling fact.
\begin{lemma}\label{lem:subsample}
    Let $(V_1,\dots,V_h)$ be a $(\gamma, \geq n)$-inflation of $H$. There exist $V_1'\subseteq V_1,\dots,V_h'\subseteq V_h$ so that $(V_1',\dots,V_h')$ is a $(\gamma,n)$-inflation of $H$.
\end{lemma}
\begin{proof}
    Let $V_i'$ be a random subset of $V_i$, chosen uniformly at random among all subsets of size exactly $n$, and make these choices independently for all $i \in [h]$. Each canonical copy of $H$ survives in $(V_1',\dots,V_h')$ with probability $\prod_{i=1}^h \frac{\ab{V_i'}}{\ab{V_i}}$. Therefore, by linearity of expectation, there exists a choice with at least $\gamma \prod_{i=1}^h \ab{V_i'}$ canonical copies, as claimed.
\end{proof}

We are now ready to prove \cref{lem:induct}.

\begin{proof}[Proof of \cref{lem:induct}]
Note that by \cref{lem:subsample}, we may assume that $(V_1,\dots,V_h)$ forms a $(\gamma,n)$-inflation of $H$.
Let $t$ be the degree of vertex $u_h$ in $H$, and assume without loss of generality that $N_H(u_h) = \{u_1,u_2,\dots,u_t\}$. 
We set $C_H = \max \left\{ 1, 8t \right\}$. 
There is nothing to prove if $t=0$, since in this case we may set $V_h^*=V_h$ and $V_1'=V_1,\dots,V_{h-1}'=V_{h-1}$. Therefore, we assume henceforth that $t\geq 1$, and therefore that $C_H=8t$.

We create an auxiliary bipartite graph $\Gamma = (A,B,E)$, where $A = V_1\times \dots\times V_t$ and $B = V_{t+1}\times\dots \times V_h$, where $(v_1,\dots,v_t)\sim_\Gamma (v_{t+1},\dots,v_h)$ if $(v_1,\dots,v_h)$ form a canonical copy of $H$. Clearly $|A|= n^t, |B|=n^{h-t}$, and $|E|\ge \gamma n^h = \gamma |A||B|$. For a vertex $b \in B$ and an index $t +1 \leq j \leq h$, we write $(b)_j$ to denote the $j$ coordinate of $b$.

Let $s = \cei{\frac{\log(n)}{4\log(1/\gamma)}}$. Note that our assumption $n \geq \gamma^{-16h}$ is equivalent to saying that $\frac{\log (n)}{\log(1/\gamma)}\geq 16h$. In particular, this implies that
\begin{equation}\label{eq:s LB}
s \geq \frac{\log(\gamma^{-16h})}{4\log(1/\gamma)} = 4h \geq 4t.
\end{equation}
It also implies that $\frac{\log(n)}{3 \log(1/\gamma)} - \frac{\log(n)}{4 \log(1/\gamma)}>1$, and thus that
$\frac{\log(n)}{4 \log(1/\gamma)}\leq s \leq \frac{\log(n)}{3 \log(1/\gamma)}$. Hence, we conclude that $n^{-1/3}\leq \gamma^s \leq n^{-1/4}$.
We now sample $b_1,\dots,b_s\in B$ uniformly at random (with repetitions), and let $A'\subseteq A$ denote their common neighborhood in $\Gamma$. Note that $A'$ is empty if, for some $\ell \in [s]$, the components of $b_\ell$ do not form a copy of $H[\{u_{t+1},\dots,u_h\}]$. 
Crucially, if this does not happen, then the fact that $N_H(u_h)$ is an independent set in $H$ implies that $A'$ has a product structure, namely that $A' = S_1 \times \dots \times S_t$, where
\[
S_i =
V_i\cap \bigcap_{\substack{\ell \in [s]\\u_j \in N_H(u_i)}} N_G((b_\ell)_j),
\]
Indeed, since $N_H(u_h)$ is an independent set, a tuple $(v_1,\dots,v_t)$ lies in $A'$ if and only if each $v_i$ is a common neighbor (in $G$) of all vertices $(b_\ell)_j$, where $\ell \in [s]$ and $u_j \sim_H u_i$. This is the only place in the proof where we use the assumption that $u_h$ lies in no triangles in $H$.

Write $X\coloneqq\ab{A'}$. By Jensen's inequality, we have that 
\[
\E[X] = \sum_{a\in A}\left(\frac{d_\Gamma(a)}{\ab B}\right)^s \ge \gamma^s \ab A = \gamma^s n^t\ge n^{t-1/3},
\]
where in the final inequality we recall that $\gamma^s \geq n^{-1/3}$.

Let $\gamma'=\gamma^{C_H}$, and let $Y$ count the number of $a\in A'$ with $d_\Gamma(a)\le \gamma' n^{h-t}$. For such a vertex, the probability that it is included in $A'$ is at most $(\gamma')^s$, and thus 
\[\E[Y]\le n^t(\gamma')^{s}=n^t (\gamma^s)^{8t} \leq n^t n^{-2t} = n^{-t},\]
where we use that $\gamma^s \leq n^{-1/4}$.

Finally, let $Z$ be the indicator random variable for the event that $\ab{\{(b_1)_h,\dots,(b_s)_h\}}\leq s/2$, that is, that at most $s/2$ vertices in $V_h$ are used as the final coordinate of one of $b_1,\dots,b_s$. We have that 
\[
\E[Z] \leq \binom{n}{s/2} \left(\frac{s/2}{n}\right)^s \leq \left(\frac{en}{s/2}\right)^{s/2} \left(\frac{s/2}{n}\right)^s = \left(\frac{es}{2n}\right)^{s/2} \leq n^{-s/4} \leq n^{-t},
\]
where we use that $es/2 \leq 2s \leq \log n \leq \sqrt n$ in the penultimate inequality\footnote{This is the only place where we use that $\gamma \leq \frac 12$, to say that $2s \leq \log n$.}, and \eqref{eq:s LB} in the final inequality.

Thus, we have that $\E[X-n^t(Y+Z)]\geq n^{t-1/3}-2$. Thus we can fix an outcome of $b_1,\dots,b_s$ and $A'$ where this occurs. Since $X\le n^t$ and $Y,Z$ take non-negative integer values, we must have $Y=Z =0$ and $X\ge n^{t-1/3}-2 \geq n^{t-1/2}$. Thus, recalling that $A' = S_1\times \dotsb \times S_t$, we have $\min_i\ab{S_i} \ge |A'|/n^{t-1} \ge \sqrt{n}=n'$. 

Next, since $Y=0$, we have that $$e(A',B)\ge \gamma'|A'||B| = \gamma' \prod_{i=1}^t\ab{S_i} \prod_{j=t+1}^{h}\ab{V_j}.$$ 
Thus, we find that $(S_1,\dots,S_t,V_{t+1},\dots,V_h)$ form a $(\gamma',\geq n')$-inflation of $H$. Let $V_i' = S_i$ for $1 \leq i \leq t$ and $V_i' = V_i$ for $t+1 \leq i \leq h-1$.
Since every canonical copy of $H'$ in $V_1' \cup \dots \cup V_{h-1}'$ extends to at most $\ab{V_h}$ canonical copies of $H$, we conclude that $(V_1',\dots,V_{h-1}')$ form a $(\gamma', \geq n')$-inflation of $H'$.

Finally, we set $V_h^*\coloneqq  \{(b_1)_h,\dots,(b_s)_h\}$. Since $Z =0$, we have that  
\[
\ab{V_h^*} \geq \frac{s}{2} \geq \frac{\log(n)}{8\log(1/\gamma)}.
\]
Meanwhile, by definition of $A'$, we have that $V_i'= S_i \subseteq \bigcap_{v\in V_h^*}N_G(v)$, for all $i \in [t]$. This is the same as saying that $G[V_i', V_h^*]$ is complete, which concludes the proof.    
\end{proof}

\section{Proof of Theorem \ref{thm:multicolor ramsey}}\label{sec:ramsey}

\cref{thm:multicolor ramsey} follows from \cref{thm:main} together with the following lemma.
\begin{lemma}\label{lem:monochromatic inflation}
    Let $q,h \geq 2$ and $N\geq q^{100^h q^{4h^2}}$ be integers, and let $\eta=q^{-100^h q^{4h^2}}$. In every $q$-coloring of $E(K_N)$, there exists a $((2q)^{-e(K_h)}, \geq \eta N)$-inflation of $K_h$ in some color $i \in [q]$.
\end{lemma}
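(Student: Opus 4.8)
The plan is to show that in every $q$-coloring of $E(K_N)$ there are a color $i$ and disjoint sets $V_1,\dots,V_h\subseteq V(K_N)$, each of size $\geq \eta N$, which form a $((2q)^{-e(K_h)},\geq \eta N)$-inflation of $K_h$ in the color-$i$ subgraph; after relabelling the $V_j$ this is exactly the stated conclusion. Conceptually this is a reduced-graph argument: one wants a ``reduced'' $q$-coloring in which a monochromatic $K_h$ --- located by multicolor Ramsey --- lifts, through a $K_h$-counting lemma, to the many canonical copies we want. The textbook way to produce such a reduced coloring is Szemer\'edi's regularity lemma, applied simultaneously to all $q$ color classes: colour each regular pair of clusters by a colour of density $\geq 1/q$ between them, find a monochromatic $K_h$ on clusters $U_{a_1},\dots,U_{a_h}$ in some colour $i$ (possible since the number of clusters dwarfs $r(K_h;q)$), and apply the counting lemma to get at least $\bigl((1/q)^{e(K_h)}-e(K_h)\epsilon\bigr)\prod_j\lvert U_{a_j}\rvert \geq (2q)^{-e(K_h)}\prod_j\lvert U_{a_j}\rvert$ canonical color-$i$ copies once $\epsilon\leq (2q)^{-2h^2}$. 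The flaw is that here $\lvert U_{a_j}\rvert=N/M$ with $M$ tower-type in $1/\epsilon$, so this only proves the lemma when $N$ is tower-large in $q$, which is far too weak for the stated bound $N\geq q^{100^hq^{4h^2}}$.

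To get the polynomial-in-$q$ exponent I would replace the regularity lemma by a two-stage density increment that builds the $h$ ``regular enough'' sets directly, without regularizing the whole coloring. Stage one is a single cheap pigeonhole-plus-Ramsey step: partition $V(K_N)$ into $M=r(K_h;q)\leq q^{O(hq)}$ equal parts, colour each pair of parts by a colour of density $\geq 1/q$ between them, and pull out of the resulting $q$-colored $K_M$ a monochromatic $K_h$; this produces disjoint $V_1,\dots,V_h$ of size $\geq N/q^{O(hq)}$ and a colour $i$ with $d_i(V_j,V_k)\geq 1/q$ for every pair. Stage two is the increment itself: while some pair $\{j,k\}$ is $\epsilon$-irregular in colour $i$ (with $\epsilon=(2q)^{-2h^2}$), descend into the subsets $V_j'\subseteq V_j$, $V_k'\subseteq V_k$ of relative size $\geq\epsilon$ witnessing irregularity; the potential $\Phi=\sum_{\{j,k\}}d_i(V_j,V_k)$ then rises by at least $\epsilon$ each step, and since $\Phi\leq e(K_h)$ the process halts after at most $e(K_h)/\epsilon\leq h^2(2q)^{2h^2}$ steps, each shrinking two of the sets by a factor $\geq\epsilon$. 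At termination all $\binom h2$ pairs are $\epsilon$-regular in colour $i$, and --- provided each $d_i(V_j,V_k)$ has been kept $\geq\tfrac12 q^{-1}$ --- the $K_h$-counting lemma yields the desired $\geq (2q)^{-e(K_h)}\prod_j\lvert V_j\rvert$ canonical color-$i$ copies. The bookkeeping closes: the total multiplicative loss in the set sizes is at most $q^{-O(hq)}\cdot\epsilon^{\,h^2(2q)^{2h^2}}$, whose exponent is $\Theta\bigl(h^4\,2^{2h^2}q^{2h^2}\bigr)$, and one checks this is comfortably below $100^hq^{4h^2}$, so the surviving sets have size $\geq\eta N$.

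The main obstacle is Stage two, precisely the tension between its two requirements: driving the configuration toward $\epsilon$-regularity of every pair, while keeping colour $i$ of density $\Omega(1/q)$ on every pair (which the counting step needs). These conflict --- shrinking $V_j$ to cure irregularity of $\{j,k\}$ can simultaneously push $d_i(V_j,V_{k'})$ below threshold for other $k'$ --- so one cannot naively carry a ``$d_i\geq \tfrac1{2q}$ on all pairs'' invariant. Resolving this is where the real work lies: one needs either a more careful choice of which witnessing subsets to descend into (using that among the four quadrants of an $\epsilon$-irregular pair one can always find one of density at least that of the whole), or a single combined potential --- a ``regularized clique density in the best colour'' --- that bakes in both the regularity gain and the density floor, is monotone under the increment, and is bounded, so that boundedness alone caps the number of steps. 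Everything else --- the reduction to this partite statement, the crude Ramsey step, and the $K_h$-counting lemma --- is routine.
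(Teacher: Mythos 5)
Your Stage two is not a proof: you yourself flag that ``resolving this is where the real work lies,'' and that is exactly right. As written, the increment step has two problems. First, the potential $\Phi = \sum_{\{j,k\}} d_i(V_j,V_k)$ is not monotone under descending into irregularity witnesses: the definition of $\eps$-irregularity only says $|d_i(V_j',V_k') - d_i(V_j,V_k)| > \eps$, so the density on the pair you are ``fixing'' can go down, and even when it goes up, passing from $V_j$ to $V_j'$ can decrease $d_i(V_j, V_{k'})$ for every other $k'$, so $\Phi$ can decrease. The quantity that is monotone in regularity arguments is the \emph{energy} $\sum d_i^2$ under a \emph{common refinement} of a partition, not the density sum under repeated intersection; you need a substantially different bookkeeping to make a potential argument out of ``shrink two parts.'' Second, even if one fixes the potential, your invariant $d_i(V_j,V_k) \geq \tfrac12 q^{-1}$ on all $\binom h2$ pairs is not maintained, for exactly the reason you describe. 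Neither of the two escape routes you sketch (picking the ``good quadrant,'' or a combined potential) is carried out, and it is not at all obvious that either closes with the claimed quantitative losses. So there is a genuine gap.

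The paper avoids this conflict entirely by not trying to find a monochromatic $K_h$ first and then regularize all $\binom h2$ pairs simultaneously. Instead it builds the clique one vertex class at a time, maintaining a ``rich inflation'' of $K_m$ for $m = 1, 2, \dots, h$: at each stage there is one growing union $Y$ of current parts and, for each active color $i$, a single candidate set $X_i$ disjoint from $Y$ with a one-sided minimum-degree condition $d_{X_i}(y) \geq p_i |X_i|$ for all $y \in Y$. The key dichotomy (Lemma~4.8 in the paper) says: either a uniformly random $K_m$ inside $Y$ has, with decent probability, a large common neighborhood in $X_i$ --- in which case you can append $X_i$ and get a rich $K_{m+1}$-inflation --- or you can pass to subsets $X_i' \subseteq X_i$, $Y' \subseteq Y$ on which $p_i$ strictly increases. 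Because there is only one set $X_i$ per color being incremented, and only a one-sided degree floor to maintain, there is no ``fixing pair $\{j,k\}$ breaks pair $\{j,k'\}$'' problem; boundedness of $p_i \leq 1$ caps the number of increments per color, and a clean recursion $\eta(q,h+1,\eps) \geq \left(\eps^4 q^{-h^2}\eta(q,h,\eps/2)\right)^{q^{4h}/\eps}$ (Proposition~4.9) delivers the stated $q^{-\poly(q,h)}$ bound. This is both simpler to make rigorous and quantitatively stronger than the cylinder-regularity route your sketch is closest to, which the paper notes only gives $\eta \geq 2^{-2^{(qh)^{O(1)}}}$.

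Your Stage one (partition into $r(K_h;q)$ parts, majority-color the pairs, apply Ramsey) and the appeal to a counting lemma are fine as reductions, but they are also exactly the parts the paper identifies as giving tower-type or doubly-exponential losses unless the regularization step is done very carefully. If you want to salvage your route, you should look at the Duke--Lefmann--R\"odl cylinder regularity lemma for how to set up the energy increment over a \emph{fixed} $h$-tuple of parts correctly; but you should be aware that even done optimally this is expected to give a weaker bound on $\eta$ than the paper's sequential construction.
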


We begin by showing how \cref{thm:multicolor ramsey} follows from \cref{thm:main,lem:monochromatic inflation}.
\begin{proof}[Proof of \cref{thm:multicolor ramsey}]
Fix an $h$-vertex triangle-free graph $H$ and an integer $q \geq 2$, and let $\gamma = q^{-h^2}$, $\eta = q^{-100^h q^{4h^2}}$, and $\Lambda_H=2h^2/\beta_H$, where $\beta_H$ is the constant from \cref{thm:nikiforov partite}. 
Fix an integer $k \geq 100^h q^{4h^2}$, and let $N = q^{\Lambda_H k}$ and $n = \eta N$. Note that $N \geq 1/\eta^2$ by our lower bound assumption on $k$ and since $\Lambda_H \geq 2$, hence $n \geq \sqrt N$.

We claim that $r(H[k];q) \leq N$. Indeed,
fix a $q$-coloring of $E(K_N)$. Note that by our lower bound on $k$, we have that $N \geq q^{100^h q^{4h^2}}$, so we may apply \cref{lem:monochromatic inflation} to find a $((2q)^{-e(K_h)}, \geq \eta N)$-inflation of $K_h$ in some color, say red. By our choices of $\gamma$ and $n$, this is a $(\gamma, \geq n)$-inflation of $K_h$, and hence also a $(\gamma, \geq n)$-inflation of $H$. We now apply \cref{thm:nikiforov partite} to conclude that the red graph contains a blowup of $H$ whose parts have size at least
\[
\beta_H \frac{\log(n)}{\log(1/\gamma)} \geq \beta_H\frac{\log(\sqrt N)}{h^2 \log(q)} =\frac{\beta_H \Lambda_H k}{2h^2} = k.
\]
That is, we have found a red copy of $H[k]$, completing the proof.
\end{proof}

The rest of this section is dedicated to proving \cref{lem:monochromatic inflation}. We remark that for the qualitative statement---namely that any $q$-coloring of $E(K_N)$ contains a $((2q)^{-e(K_h)}, \geq\eta N)$-inflation of $K_h$ for \emph{some} $\eta>0$---there is a fairly simple proof using standard arguments from regularity theory. Before proceeding with the proof of \cref{lem:monochromatic inflation}, we sketch this alternative proof\footnote{For this high-level proof sketch, we assume some familiarity with the basic concepts related to Szemer\'edi's regularity lemma. None of this will be used in the sequel, so a reader unfamiliar with these topics should feel free to skip to the next paragraph.}.

Let $\varepsilon = \varepsilon(H,q)>0$ be sufficiently small. Fix a $q$-coloring of $E(K_N)$. By applying a colored version of Szemer\'edi's regularity lemma (see e.g.\ \cite[Theorem 1.18]{MR1395865}), we may partition $V(K_N)$ into a bounded number of parts, such that each of the color classes is $\varepsilon$-regular with respect to this partition. By Tur\'an's theorem and by picking $\varepsilon$ sufficiently small, we may pass to $r(K_h;q)$ parts, such that all pairs between them are $\varepsilon$-regular in each of the colors. We now define an auxiliary coloring of $K_{r(K_h;q)}$ by coloring an edge according to the most popular color in the corresponding pair. By the definition of $r(K_h;q)$, there is a monochromatic copy of $K_h$ in this coloring, meaning that we can find $h$ parts $V_1,\dots,V_h$ such that all pairs between them are $\varepsilon$-regular and have edge density at least $1/q$ in some color, say red. The $K_h$ counting lemma implies that $V_1,\dots,V_h$ contain at least $(q^{e(K_h)}-\delta(\varepsilon))\prod_{i=1}^h \ab{V_i}$ canonical red copies of $K_h$, where $\delta(\varepsilon)$ tends to $0$ as $\varepsilon \to 0$. By choosing $\varepsilon$ sufficiently small, we conclude that $(V_1,\dots,V_h)$ form the desired inflation of $K_h$ in red. Due to the use of the regularity lemma, this proof gives terrible, tower-type bounds on $\eta$. One can obtain better bounds, of the form $2^{-2^{(qh)^{O(1)}}}$, by replacing the use of the regularity lemma by the cylinder regularity lemma \cite{MR1333857} (see also \cite{MR4170446} for a colored version of the cylinder regularity lemma).

However, we shall present an alternative proof which avoids such regularity techniques, and gives the stronger bound on $\eta$ claimed in \cref{lem:monochromatic inflation}. Thanks to this improved bound, we have that \cref{thm:multicolor ramsey} holds when $k$ is a polynomial in $q$.

\subsection{Preliminary lemmas}
We begin by collecting a few lemmas that will be used in the proof of \cref{lem:monochromatic inflation}.
 Our first lemma is standard, and says that a density decrement in one part of a bipartite graph can be converted to a density increment in a complementary part.
 We denote by $d_{A}(y) \coloneqq \ab{N(y) \cap A}$ the degree of a vertex $y$ into some vertex set $A$.

\begin{lemma}\label{lem:other increment}
    Let $\Gamma = (X,Y,E)$ be a bipartite graph where $d(y)\ge p|X|$ for all $y\in Y$. Suppose that $Y'\subseteq Y$ and $X^*\subseteq X$ are such that $d_{X^*}(y)\le (1-\xi)p|X^*|$ for each $y\in Y'$. 
    
    Then defining $X' \coloneqq X\setminus X^*$ and writing $c \coloneqq |X^*|/|X|$, we have that $d_{X'}(y)\ge (1+c\xi)p|X'|$ for all $y\in Y'$.  
\end{lemma}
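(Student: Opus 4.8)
The plan is to prove \cref{lem:other increment} by a direct double-counting argument: for each $y \in Y'$, we know its total degree into $X$ is at least $p|X|$, and its degree into $X^*$ is at most $(1-\xi)p|X^*|$, so the remaining neighbors must lie in $X' = X \setminus X^*$; subtracting and rearranging should give the claimed lower bound on $d_{X'}(y)$.

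First I would fix an arbitrary $y \in Y'$ and write $d_X(y) = d_{X^*}(y) + d_{X'}(y)$, since $X^*$ and $X'$ partition $X$. Using the hypothesis $d_X(y) \ge p|X|$ and $d_{X^*}(y) \le (1-\xi)p|X^*|$, this gives
\[
d_{X'}(y) = d_X(y) - d_{X^*}(y) \ge p|X| - (1-\xi)p|X^*|.
\]
Next I would substitute $|X| = |X'| + |X^*|$ and $|X^*| = c|X|$, so that $|X'| = (1-c)|X|$ and $|X^*| = \frac{c}{1-c}|X'|$. Plugging in,
\[
d_{X'}(y) \ge p|X'| + p|X^*| - (1-\xi)p|X^*| = p|X'| + \xi p |X^*| = p|X'| + \xi p \cdot \frac{c}{1-c}|X'|.
\]
Since $0 \le c < 1$ we have $\frac{c}{1-c} \ge c$, hence $d_{X'}(y) \ge (1 + c\xi) p|X'|$, as desired. (If $c = 1$, i.e.\ $X^* = X$, then $X'$ is empty and the statement is vacuous, so we may assume $c < 1$.)

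There is essentially no obstacle here; the only mild subtlety is bookkeeping the relationship between $|X^*|$, $|X'|$, and $|X|$ correctly, and noting that the factor $\frac{c}{1-c}$ is at least $c$ so that we land exactly on the stated bound $(1+c\xi)$ rather than something slightly stronger. I would present the computation in the order above, keeping it to a few lines.
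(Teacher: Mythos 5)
Your proof is correct and is essentially the same direct computation as the paper's: both write $d_{X'}(y) = d_X(y) - d_{X^*}(y) \ge p|X| - (1-\xi)p|X^*| = p|X'| + \xi p|X^*|$ and then lower-bound the last term by $\xi p c|X'|$. The only cosmetic difference is that the paper observes $\xi p|X^*| = \xi p c|X| \ge \xi pc|X'|$ directly from $|X|\ge|X'|$, whereas you route through $|X^*| = \tfrac{c}{1-c}|X'|$ and the inequality $\tfrac{c}{1-c}\ge c$; these are algebraically equivalent.
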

\begin{proof}For any $y \in Y'$, we have
\[
d_{X'}(y) = d(y)-d_{X^*}(y) \ge p|X|-(1-\xi)p|X^*|= p|X'|+\xi p c|X| \ge (1+c\xi)p|X'|.\qedhere
\]
\end{proof}
Our next lemma shows that every dense graph contains a large, bipartite subgraph satisfying a one-sided minimum degree condition.
\begin{lemma}\label{lem:dense to bipartite}
    Let $n \geq 2$, let $G$ be an $n$-vertex graph with at least $\rho \binom n2$ edges, and let $\eps\in (0,1]$. Then there exist disjoint $A,B \subseteq V(G)$ with $\ab A, \ab B \geq (\eps^2/32)\rho n$ such that $d_A(y) \geq (1-\eps)\rho \ab A$ for all $y \in B$.
\end{lemma}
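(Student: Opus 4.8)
The plan is to prove \cref{lem:dense to bipartite} by a two-step random selection: first split the vertex set randomly to pass to a bipartite subgraph retaining a positive fraction of the edges, then restrict one side to a set on which every vertex on the other side has large degree, using a standard Markov/averaging argument on the degree sequence.

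First I would partition $V(G)$ uniformly at random into two sets $X$ and $Y$ of equal size (up to one vertex); each edge lands with one endpoint in each side with probability roughly $1/2$, so in expectation the bipartite graph $G[X,Y]$ has at least $(\rho/2)\binom n2 \cdot (1+o(1))$ crossing edges, hence at least $(\rho/5) n^2$ edges for a suitable fixing, say. Fix such a partition; now $G[X,Y]$ has at least $c\rho n^2$ edges between the two halves of size $n/2$, so the average degree of a $Y$-vertex into $X$ is at least $2c\rho n = \Theta(\rho)\cdot|X|$. Write $p$ for the resulting average density $e(X,Y)/(|X||Y|) \geq \Omega(\rho)$.

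Next I would restrict $Y$: let $B = \{y \in Y : d_X(y) \geq (1-\eps)p|X|\}$ — no wait, that is not quite the right move since a few high-degree vertices could mask many low-degree ones. The cleaner route is to instead pick $A$ randomly. So: the second step is to choose $A \subseteq X$ by including each vertex independently with an appropriate probability (or to take $A$ a uniformly random subset of a fixed size $m$), and let $B = \{y \in Y : d_A(y) \geq (1-\eps)\rho|A|\}$. Using concentration — a lower-tail Chernoff bound on $d_A(y)$, which has mean $\ge p|A| \ge \rho |A|$ — one shows that for any individual $y$ the failure probability $\pr[d_A(y) < (1-\eps)\rho|A|]$ is exponentially small in $\eps^2 \rho |A|$; choosing $|A| = \Theta(\eps^{-2}\rho^{-1}\log(\text{something}))$ makes this probability small enough that, in expectation, $|B| \geq |Y|/2 = \Omega(n)$ while still $|A| = \Omega(\eps^2 \rho n)$. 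Then fix an outcome achieving both bounds simultaneously, and discard vertices of $B$ outside... actually one must be slightly careful: the constant $\eps^2/16$ in the statement is explicit, so rather than Chernoff I would use the slicker deterministic averaging argument of dependent-random-choice flavor, or simply compute second moments. The key identity is that $\sum_{y \in Y} d_A(y) \ge p|A||Y|$, and one wants to bound the number of $y$ with $d_A(y) < (1-\eps)\rho|A|$; this alone is not enough (those $y$ could still be few but $A$ could be small), so one genuinely needs the random choice of $A$ with a variance or exponential-moment bound to control the degree of \emph{every} surviving $y$ into a \emph{large} $A$.

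The main obstacle is getting the explicit constant $\eps^2/16$: the naive Chernoff computation gives the right shape $|A|, |B| = \Omega(\eps^2 \rho n)$ but with unspecified constants, so to match the stated bound one should instead (i) be careful that the bipartition step costs only a factor close to $2$ in the density (e.g.\ by a direct expectation computation, $\E[e(X,Y)] \ge \frac{n}{2(n-1)} e(G) \ge \frac{\rho n^2}{4}$ after accounting for the $\binom n2$), giving $p \ge \rho/2$ on parts of size $n/2$, and (ii) choose $|A|$ and the threshold so the numerology closes. Concretely I expect that taking $A$ to be a random subset of $X$ of size $|X^*|$ with $|X^*| \approx (\eps^2/8)\rho|X|$ and then passing to the set of $y$ with large degree into it, the bookkeeping works out with a little room to spare; the bulk of the writeup is this elementary but slightly fiddly calculation, and I would present it as a short self-contained computation rather than invoking a black-box concentration inequality, so the constant can be tracked.
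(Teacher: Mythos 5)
Your plan has a genuine gap, and interestingly it comes from discarding an idea you had. You first consider ``restrict $Y$ to $\{y : d_X(y)\geq(1-\eps)p|X|\}$,'' then retract it because ``a few high-degree vertices could mask many low-degree ones.'' But identifying exactly those high-degree vertices is the crucial first step of the paper's proof: it shows, by a double-counting argument comparing $\sum_v d(v)=2e(G)\geq 2\rho\binom n2$ against the maximum possible contribution of the low-degree vertices, that at least $(\eps\rho/2)n$ vertices satisfy $d(v)\geq(1-\eps/2)\rho(n-1)$. Your worry is real but is resolved precisely by this count, not by abandoning the restriction. Without it, the random-$A$-plus-concentration route cannot work: if $y$ has $d_X(y)<(1-\eps)\rho|X|$ then $\E[d_A(y)]<(1-\eps)\rho|A|$ for a random $A\subseteq X$, so no tail bound will place such $y$ in $B$, and an adversarial graph can make almost all of $Y$ consist of such vertices.

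The paper's construction also inverts the roles from yours: it samples a \emph{small} random set $B_0$ (of size $\approx(\eps/4)n$), defines $A\coloneqq V\setminus B_0$ (so $A$ is nearly all of $V$, and the bound $|A|\geq(\eps^2/16)\rho n$ is very slack), and sets $B$ to be the heavy vertices in $B_0$ whose degree into $A$ stayed above $(1-\eps)\rho(n-1)$; the only probabilistic estimate needed is a single application of Markov's inequality to a binomial variable, giving a clean factor of $\tfrac12$. This avoids both the preliminary bipartition (which would cost an unnecessary factor in density) and the Chernoff/hypergeometric bookkeeping you correctly flag as awkward for tracking the explicit $\eps^2/16$. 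Your second-moment / concentration route can in principle be patched by reinstating the heavy-vertex count, but as written the key lemma is absent and the constants are left unresolved.
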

\begin{proof}
    Let $V=V(G)$.
    Let $V_{\mathrm{light}}\subseteq V$ be the set of vertices with $d(v)\le (1-\eps/2)\rho (n-1)$, and let $V_{\mathrm{heavy}}\coloneqq V\setminus V_{\mathrm{light}}$. We have that 
    \[
    2e(G) - \sum_{v \in V_{\mathrm{light}}} d(v) = \sum_{v \in V_{\mathrm{heavy}}} d(v) \leq \ab{V_{\mathrm{heavy}}} (n-1)
    \]
    and that
    \[
     2e(G)-\sum_{v\in V_{\mathrm{light}}}d(v) \geq 2\rho \binom n2 - \ab{V_{\mathrm{light}}}\left(\left(1-\frac \eps 2\right) \rho (n-1)\right) \geq \frac \eps 2 \rho n(n-1).
    \]
    Combining these two bounds, we find that $\ab{V_{\mathrm{heavy}}}\geq (\eps \rho/2)n$.

    There is nothing to prove if $\rho=0$, as we may then take $A=B=\emptyset$, so we may assume $\rho>0$.
    If $n\leq 32/(\rho\eps^2)$, then we are again done, as we may take $A,B$ to both be singletons, namely the endpoints of some edge, which exists since $e(G) \geq \rho\binom n2>0$. We thus assume henceforth that $(\eps/4)n\geq 1$.
    We let $B_0\subseteq V$ be a uniformly random set of size $\flo{ (\eps/4)n} $. 
    We claim that for every $v \in V_{\mathrm{heavy}}$, we have that
    \begin{equation}\label{eq:heavy vtx}
        \P\left(d_{V\setminus B_0}(v)< (1-\eps)\rho (n-1) ~\middle|~ v\in B_0\right) \leq \frac 12.
    \end{equation}
    Indeed, fix some $v \in V_{\mathrm{heavy}}$, and let $D=d(v)$. The event that $d_{V \setminus B_0}(v) < (1-\varepsilon)\rho(n-1)$ is precisely the event that among the $D$ neighbors of $v$, more than $D-(1-\varepsilon)\rho(n-1)$ of them are included in $B_0$. Therefore,
    \begin{align*}
       \P\left(d_{V\setminus B_0}(v)< (1-\eps)\rho (n-1) ~\middle|~ v\in B_0\right) &= \P\left(\ab{(B_0 \setminus \{v\}) \cap N(v)}>D-(1-\varepsilon)\rho(n-1) ~\middle|~ v \in B_0\right)\\
        &\leq \frac{\E[\ab{(B_0 \setminus \{v\})\cap N(v)} \mid v \in B_0]}{D-(1-\varepsilon)\rho(n-1)},
    \end{align*}
    by Markov's inequality. 
    Conditional on the event $v \in B_0$, the distribution of $B_0 \setminus\{v\}$ is that of a uniformly random subset of $V \setminus \{v\}$ of size exactly $\flo{(\varepsilon/4)n}-1$.
    As $\flo{(\eps/4)n}-1 \leq (\eps/4)(n-1)$, a given vertex of $V \setminus \{v\}$ is included in $B_0 \setminus \{v\}$ with probability at most $\eps/4$, and therefore
    \[
    \E[\ab{(B_0 \setminus \{v\})\cap N(v)} \mid v \in B_0] \leq \frac \eps 4 \ab{N(v)} = \frac{\eps D}{4},
    \]
    by linearity of expectation and the assumption $\ab{N(v)}=D$. Continuing the computation above, we conclude that
    \begin{align*}
       \P\left(d_{V\setminus B_0}(v)< (1-\eps)\rho (n-1) ~\middle|~ v\in B_0\right)
        &\leq \frac{\E[\ab{(B_0 \setminus \{v\})\cap N(v)} \mid v \in B_0]}{D-(1-\varepsilon)\rho(n-1)}\\
        &\leq \frac{\eps D/4}{D-(1-\eps)\rho(n-1)}.
    \end{align*}
    Recall that $D \geq (1-\eps/2)\rho(n-1)$, since $v \in V_{\mathrm{heavy}}$.
    This final expression is a decreasing function of $D$, hence it is maximized when $D=(1-\eps/2)\rho(n-1)$. Plugging this in, we find that
    \begin{align*}
       \P\left(d_{V\setminus B_0}(v)< (1-\eps)\rho (n-1) ~\middle|~ v\in B_0\right)
        &\leq \frac{\eps D/4}{D-(1-\eps)\rho(n-1)}\\
        &\leq \frac{(\eps/4)(1-\eps/2)\rho(n-1)}{(1-\eps/2)\rho(n-1)-(1-\eps)\rho(n-1)}\\
        &=\frac{(\eps/4)(1-\eps/2)\rho(n-1)}{(\eps/2)\rho(n-1)}\\
        &\leq \frac 12,
    \end{align*}
    proving \eqref{eq:heavy vtx}.
    
    Now, define $B$ to comprise all vertices $v \in B_0 \cap V_{\mathrm{heavy}}$ with $d_{V \setminus B_0}(v) \geq (1-\eps)\rho (n-1)$. 
    From \eqref{eq:heavy vtx}, we find that
    \[
    \E[\ab B] = \sum_{v \in V_{\mathrm{heavy}}} \P(v \in B_0) \P(v \in B \mid v \in B_0) \geq \frac 12 \sum_{v \in V_{\mathrm{heavy}}} \P(v \in B_0) \geq \frac \eps {16} \ab{V_{\mathrm{heavy}}} \geq \frac{\eps^2\rho}{32}n,
    \]
    using the fact that each vertex is included in $B_0$ with probability at least $\varepsilon/8$ (since $\flo{(\varepsilon/4)n}\geq1$) and that $\ab{V_{\mathrm{heavy}}}\geq (\varepsilon\rho/2)n$
    Fix an outcome where $\ab B \geq (\eps^2/32)\rho n$, and let $A = V \setminus B_0$. Note that $\ab A = n - \flo{(\eps/4)n}\geq (\eps^2/32)\rho n$, as claimed. Finally, by the definition of $B$, we have that
    \[
    d_A(y) \geq (1-\eps)\rho (n-1) \geq (1-\eps)\rho \ab A
    \]
    for all $y \in B$, where we use the fact that $\ab A \leq n-1$.
\end{proof}

Although our goal in \cref{lem:monochromatic inflation} is only to find an inflation of $K_h$ in some color, it will be convenient to inductively maintain a richer structure, which we now define.
\begin{definition}
    Let $V_1,\dots,V_h$ be pairwise disjoint vertex subsets of some graph $G$.
    We say $(V_1,\dots,V_h)$ is a $(\rho,\eps)$-rich inflation of $K_h$ if $e(G[V_1])\ge \rho \binom{|V_1|}{2}$ and $(V_1,\dots,V_h)$ is a $((1-\eps)\rho)^{e(K_h)}$-inflation of $K_h$. If additionally $\ab{V_i}\geq n$ for all $i$, we say it is a $(\rho,\eps,\geq n)$-rich inflation of $K_h$.
\end{definition}
The next lemma is the key step in our density increment strategy. It shows that at every step, we may either improve a rich inflation of $K_h$ to a rich inflation of $K_{h+1}$, or perform a density-increment step.

\begin{lemma}\label{lem: rich increment}
    Suppose that $(V_1,\dots,V_h)$ is a $(\rho,\eps,\geq n)$-rich inflation of $K_h$ for some $h \geq 1$ and some $\eps \in (0,\frac 19]$ and $\rho \in (0,\frac 12]$, and let $Y = \bigcup_{i=1}^h V_i$. Let $X$ be some set of vertices, disjoint from $Y$, with  $d_X(y)\ge p|X|$ for all $y\in Y$, where $p \geq (1-\eps)\rho$. Then at least one of the following holds.
    \begin{itemize}
        \item The tuple $(V_1,\dots,V_h,X)$ is a $(\rho,2\eps,\geq \min\{n,\ab X\})$-rich inflation of $K_{h+1}$, or
        \item there exist $X' \subseteq X, Y' \subseteq Y$ with $|X'|\ge \eps \rho^{h^2}|X|,|Y'|\ge \eps \rho^{h^2} n$ which satisfy $d_{X'}(y)\ge (1+\eps \rho^{3h})p|X'|$ for all $y\in Y'$.
    \end{itemize}
\end{lemma}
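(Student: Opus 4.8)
The plan is to analyze the bipartite graph between $X$ and $V_1$, which by assumption is dense (every $y \in V_1 \subseteq Y$ has $d_X(y) \geq p|X|$). The dichotomy will come from asking whether a random vertex of $X$ typically "sees" the rich inflation $V_1,\dots,V_h$ well. Concretely, for $x \in X$ let $S_i(x) = N(x) \cap V_i$; I would call $x$ \emph{good} if $|S_i(x)| \geq (1-\eps)p|V_i| \geq (1-\eps)^2\rho|V_i|$ for every $i \in [h]$, and \emph{bad} otherwise. Let $X^* \subseteq X$ be the set of bad vertices and $X^{\circ} = X \setminus X^*$ the good ones.

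\textbf{Case 1: many good vertices.} Suppose $|X^{\circ}| \geq \eps\rho^{h^2}|X| \geq \eps\rho^{h^2} n \geq n$ (I would choose constants so this threshold is what triggers the first branch — note $|X^{\circ}| \geq (1 - \text{something small})|X|$ will actually hold, see below). I claim $(V_1,\dots,V_h,X^{\circ})$ is a $(\rho, 2\eps, \geq n)$-rich inflation of $K_{h+1}$. First, $e(G[V_1]) \geq \rho\binom{|V_1|}{2}$ is inherited. Second, I need to count canonical copies of $K_{h+1}$ among $(V_1,\dots,V_h,X^{\circ})$, i.e.\ pairs consisting of a canonical $K_h$-copy $(v_1,\dots,v_h)$ in $(V_1,\dots,V_h)$ together with $x \in X^{\circ}$ adjacent to all of $v_1,\dots,v_h$. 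For a fixed good $x$, the number of canonical $K_h$-copies inside $S_1(x),\dots,S_h(x)$ is large: since $V_1,\dots,V_h$ is a $((1-\eps)\rho)^{e(K_h)}$-inflation and each $S_i(x)$ misses only an $\eps p \le \eps \rho$ fraction of $V_i$ (not quite — I need the inflation property to survive passing to large subsets). Here is the subtlety I would address: a $(\gamma)$-inflation need not remain a $(\gamma)$-inflation after deleting a few vertices from each part. So instead I would argue by averaging \emph{over} $x$: the total count of canonical $K_{h+1}$-copies with apex in $X^{\circ}$ equals $\sum_{(v_1,\dots,v_h)} |N(v_1)\cap\dots\cap N(v_h) \cap X^{\circ}|$, and I would lower-bound this by first summing over \emph{all} $x \in X$ the number of $K_h$-copies in $S_1(x),\dots,S_h(x)$ and then subtracting the contribution of bad $x$. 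Summing over all $x\in X$: $\sum_x \#\{K_h\text{-copies in }S_\bullet(x)\} = \sum_{(v_1,\dots,v_h)\text{ canonical }K_h} |N(v_1)\cap\dots\cap N(v_h)\cap X| \geq (\#K_h\text{-copies})\cdot(p - h\eps p)|X|$-type bound via inclusion–exclusion on the degree conditions $d_X(v_i) \geq p|X|$ — actually I'd use that each $v_i \in V_i \subseteq Y$ has $d_X(v_i)\ge p|X|$, so $|X \setminus N(v_1)\cap\dots\cap N(v_h)| \le h(1-p)|X|$, giving common neighborhood $\ge (1 - h(1-p))|X| \ge$ ... this can be negative, so this crude bound fails and I'd instead just keep the quantity $\sum_x \#\{K_h\text{-copies in }S_\bullet(x)\}$ as is and push the whole argument through the good/bad split rather than an explicit lower bound on common neighborhoods.

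\textbf{Case 2: many bad vertices.} If $|X^*| \geq \eps\rho^{h^2}|X|$ (taking this as the complementary threshold, with constants arranged so the two cases are genuinely exhaustive), then each bad $x \in X^*$ has $d_{V_i}(x) < (1-\eps)p|V_i|$ for some $i = i(x) \in [h]$. By pigeonhole, at least $|X^*|/h$ of the bad vertices share the same index $i$; call this set $X^{**}$ and set $Y' = V_i$. Then $|X^{**}| \geq |X^*|/h \geq (\eps\rho^{h^2}/h)|X|$ and every $x \in X^{**}$ has $d_{Y'}(x) \le (1-\eps)p|Y'|$. Now I apply \cref{lem:other increment} with the bipartite graph between $Y'=V_i$ (playing the role of $X$ in that lemma — vertices on the side where we have a minimum-degree hypothesis, namely every $x\in X \supseteq X^{**}$ has $d_{V_i}(x)\ge p|V_i|$, wait I need the hypothesis the other way) — more carefully: in \cref{lem:other increment} the roles are $(X,Y,E)$ with $d(y) \ge p|X|$ for all $y \in Y$; here I take that lemma's "$X$" $= X$ (our big set), "$Y$" $= V_i$, since every $v \in V_i \subseteq Y$ satisfies $d_X(v) \ge p|X|$; then "$X^*$" of the lemma $= X^{**}$, "$Y'$" of the lemma $= V_i$ — but that requires every $v\in V_i$ to have $d_{X^{**}}(v) \le (1-\xi)p|X^{**}|$, which is not what Case 2 gives. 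So the direction is wrong and I would instead reverse: view the bipartite graph with parts $X$ and $V_i$, apply \cref{lem:other increment} in the orientation where $V_i$ is the "$X$"-side: every $x \in X$ has $d_{V_i}(x) \ge p|V_i|$ (this is the hypothesis $d_X(y)\ge p|X|$ of the lemma with $y=x$), and $X^{**}\subseteq X$ is a set of $y$'s with $d_{V_i}(x) = d_{V_i'}(y) \le (1-\eps)p|V_i|$ — now "$X^*$" of the lemma must be a subset of $V_i$, not of $X$. This mismatch means I should apply \cref{lem:other increment} with a role-swap I set up cleanly from the start: parts $(V_i, X)$, minimum degree $d_{V_i}(x)\ge p|V_i|$ for all $x\in X$; take $X^* := $ some subset of $V_i$ — but Case 2 produces a subset of $X$, so actually the correct reading is: $\Gamma = (X, V_i)$ with the $V_i$-vertices as "$Y$", $d_{V_i\text{-vertex}}$... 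I will simply set it up so that $X^{**}$ is the set on which density \emph{decreased}, pass to $X' = X \setminus X^{**}$ via the identity $d_{X'}(y) = d_X(y) - d_{X^{**}}(y) \ge p|X| - (1-\eps)p|X^{**}|$ applied directly (this is exactly the one-line computation in the proof of \cref{lem:other increment}, so I can just invoke that lemma with $y$ ranging over $V_i = Y'$, $X^* = X^{**}$, getting $d_{X'}(v) \ge (1 + c\eps)p|X'|$ for all $v \in Y'=V_i$ where $c = |X^{**}|/|X| \ge \eps\rho^{h^2}/h$). Thus $d_{X'}(v) \ge (1 + \eps^2\rho^{h^2}/h)p|X'|$ for all $v \in Y'$, and I'd verify $\eps^2\rho^{h^2}/h \ge \eps\rho^{2h}$ (true for $h\ge 1$ since $\rho^{h^2}\cdot\eps/h$ vs $\rho^{2h}$: need $\eps\rho^{h^2 - 2h} \ge h$, which is \emph{false} for large $h$) — so I would instead be more careful about the exponents, likely defining "bad" with a weaker deficiency or absorbing an $\eps$ appropriately so the final increment is at least $\eps\rho^{2h}$; the bookkeeping here, matching $\rho^{h^2}$ in the set sizes and $\rho^{2h}$ in the density gain, is the main obstacle and is exactly what the stated constants are engineered for.

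\textbf{Summary of the plan and the hard part.} In short: split $X$ by whether a vertex's neighborhoods in all of $V_1,\dots,V_h$ are near-maximal; if good vertices are plentiful, append them to get a rich $K_{h+1}$-inflation (the count of copies degrades from $((1-\eps)\rho)^{e(K_h)}$ to $((1-2\eps)\rho)^{e(K_{h+1})}$, which needs $\rho \le 1/2$ and $\eps \le 1/9$ to swallow the extra factor and the $(1-\eps) \to (1-2\eps)$ slack, plus the vertex count $|X^\circ| \ge n$); if bad vertices are plentiful, pigeonhole on the offending index, restrict to a large class, and apply \cref{lem:other increment} to get the density increment on the complementary part $X' = X \setminus X^{**}$ against $Y' = V_i$. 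The genuinely delicate part is the quantitative matching: ensuring (i) in Case 1 the copy count really only drops by the claimed amount when passing to good vertices — which I'd handle by bounding the number of canonical $K_h$-copies lost when we delete, from each $V_i$, the vertices $v_i$ adjacent to "too few" good $x$'s, rather than by naive subset-inflation; and (ii) in Case 2 that the increment $c\xi$ is at least $\eps\rho^{2h}$ and the set sizes are at least $\eps\rho^{h^2}|X|$ and $\eps\rho^{h^2}n$ simultaneously, which forces the precise choice of deficiency parameter in the definition of "bad." Everything else — the degree arithmetic, the pigeonhole, the invocation of \cref{lem:other increment} — is routine.
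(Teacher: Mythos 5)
Your approach has two genuine gaps, both of which you partially flag yourself but do not resolve, and the paper's proof takes a fundamentally different route that avoids them.

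\textbf{Case 2 has the wrong directionality, and it is fatal.} You define $X^{**}$ as a set of vertices $x\in X$ that are deficient into $V_i$, i.e.\ $d_{V_i}(x)<(1-\eps)p|V_i|$. But \cref{lem:other increment}, applied with ``$Y$''$\,=V_i$ and ``$X$''$\,=X$, requires a subset $X^*\subseteq X$ and vertices $v\in V_i$ with $d_{X^*}(v)\leq(1-\xi)p|X^*|$; that is, you need vertices of $V_i$ with small degree into a piece of $X$, not vertices of $X$ with small degree into $V_i$. These are independent conditions, and you go back and forth without closing the gap. One could try to repair this by an edge-count/Markov argument converting ``$e(X^{**},V_i)$ small'' into ``many $v\in V_i$ with small $d_{X^{**}}(v)$'', but then you are still taking $X^*=X^{**}$ with $|X^{**}|/|X|\geq\eps\rho^{h^2}/h$, so $c\xi\lesssim\eps^2\rho^{h^2}/h$, which, as you observe, is far below the target $\eps\rho^{2h}$. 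This is not a bookkeeping issue to be finessed: you have chosen $X^*$ to be the \emph{small} set of outliers, whereas the density increment is proportional to $c=|X^*|/|X|$, so $X^*$ must be a constant-fraction (in fact $\geq\tfrac12 p^{h-1}$) subset of $X$ for the gain to be large enough. That structural choice is the crux, and the ``good/bad vertex'' decomposition is not compatible with it.

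\textbf{Case 1 does not close either.} Having $|S_i(x)|\geq(1-\eps)p|V_i|$ for all $i$ does not imply that $S_1(x),\dots,S_h(x)$ contains many canonical $K_h$-copies; the $K_h$-copies of $(V_1,\dots,V_h)$ could be concentrated exactly on the vertices that $x$ misses. You recognize this (``a $\gamma$-inflation need not remain a $\gamma$-inflation after deleting a few vertices''), attempt a union bound on common neighborhoods which correctly fails, and then defer to an unspecified ``push the whole argument through the good/bad split,'' which is not an argument. There is also a smaller mismatch: you would produce a rich inflation with last part $X^\circ$, while the lemma's first alternative is about $X$ itself (this alone is fixable, since the copy count in $X$ dominates that in $X^\circ$, but it is symptomatic of the argument not being set up toward the actual statement).

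The paper's proof does not split $X$ into good and bad vertices at all. It assumes the first alternative fails, samples a random tuple $(y_1,\dots,y_h)\in V_1\times\cdots\times V_h$, and considers the filtration $N_0=X\supseteq N_1\supseteq\cdots\supseteq N_h$ of common neighborhoods $N_\tau=N_{\tau-1}\cap N(y_\tau)$. Conditional on the tuple forming a $K_h$, the endpoint $|N_h|$ is shown (via the failure of the first alternative plus a Markov argument) to be small with probability at least $\eps$, which forces, with probability $\geq\eps\delta/h$ for some index $i^*$, a ``first bad step'' $|N_{i^*}|<(1-\eps/(2h))p|N_{i^*-1}|$. Fixing a popular choice of $y_1^*,\dots,y_{i^*-1}^*$ yields $X^*:=\bigcap_{i<i^*}N(y_i^*)$ with $|X^*|\geq\tfrac12p^{h-1}|X|$ (large!) and $Y'\subseteq V_{i^*}$ of density $\geq\eps\delta/h$ whose vertices are deficient into $X^*$ — exactly what \cref{lem:other increment} wants, with $c\geq\tfrac12p^{h-1}$ and $\xi=\eps/(2h)$, giving $c\xi\geq\eps\rho^{2h}$. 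A separate short argument handles $p>1-1/(2h)$ to guarantee $|X'|=|X|-|X^*|\geq|X|/(2h)$. In short, the correct decomposition is along the filtration of a random partial $K_h$, not vertex-by-vertex over $X$.
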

The proof of \cref{lem: rich increment} is fairly technical, but the main idea is simple, so we explain it before proceeding with the formal proof. We pick uniformly random vertices $y_1 \in V_1,\dots,y_h \in V_h$, where we think that we make these choices sequentially. Throughout, the key random variable that we track is the size $\ab{N_i}$ of the common neighborhood of $y_1,\dots,y_i$ in $X$, and we track the evolution of this random variable as $i$ increases from $1$ to $h$. 

By the assumption that $(V_1,\dots,V_h)$ is a rich inflation of $K_h$, we have a good probability that $y_1,\dots,y_h$ form a copy of $K_h$; let us henceforth condition on this event. If, with good probability, $\ab{N_h}$ is large at the end of the process, then this means that there are many vertices in $X$ which complete $(y_1,\dots,y_h)$ to a copy of $K_{h+1}$; in this case, we have the first outcome of \cref{lem: rich increment}, namely that $(V_1,\dots,V_h,X)$ is a rich inflation of $K_{h+1}$. Therefore, we may assume that with good probability, $\ab{N_h}$ is small at the end of the process. Since we started with $\ab{N_0}=\ab X$, there must be some step $i$ at which, with good probability, $\ab{N_i}$ shrinks significantly relative to $\ab{N_{i-1}}$. But this precisely means that there are many choices of $y_i \in V_i$ which have few neighbors in $N_{i-1}$, i.e.\ we have found large subsets $Y' \subseteq V_i\subseteq Y$ and $X^* \subseteq N_{i-1}\subseteq X$ with few edges between them. Applying \cref{lem:other increment} allows us conclude that in this case, we have the second outcome of \cref{lem: rich increment}.

\begin{proof}[Proof of \cref{lem: rich increment}]

    We begin by handling the case $h=1$. In this case, the fact that $d_X(y) \geq p \ab X$ for all $y \in Y=V_1$ implies that the number of canonical copies of $K_2$ in $(V_1,X)$ is at least $p \ab X \ab Y \geq (1-\eps)\rho \ab X \ab {V_1}$. Hence the tuple $(V_1,X)$ a $(\rho,\eps,\geq n)$-rich inflation of $K_2$, and therefore a $(\rho,2\eps,\geq n)$-rich inflation as well. This is one of the two claimed outcomes, and thus we may assume henceforth that $h \geq 2$.

By deleting arbitrary edges, we may assume that $d_X(y) = p|X|$ for all $y \in Y$. Indeed, both claimed outcomes are monotone under adding edges, so if we prove the result after these edge deletions we have also proved it for the original graph. 
We suppose henceforth that $(V_1,\dots,V_h,X)$ is not a $(\rho,2\eps)$-rich inflation of $K_h$, and seek to prove the existence of the claimed sets $X',Y'$.
    
Recall that we assumed that $(V_1,\dots,V_h)$ is a $(\rho,\eps)$-rich inflation, implying that $e(G[V_1]) \geq \rho \binom{\ab{V_1}}2$; hence the fact that $(V_1,\dots,V_h,X)$ is not a $(\rho,2\eps)$-rich inflation implies that the number of canonical $K_{h+1}$ is less than $((1-2\eps)\rho)^{e(K_{h+1})} \prod_{i=1}^h \ab{V_i}\cdot \ab X$.

Let $(y_1,\dots,y_h)\in V_1\times \dots \times V_h$ be chosen uniformly at random, and let $N_h \subseteq X$ be their common neighborhood in $X$.
Let $\EE$ denote the event that $y_1,\dots,y_h$ form a copy of $K_h$, and note that $\P(\EE) \geq ((1-\eps)\rho)^{e(K_h)}$ by assumption. 
Let $x \in X$ be chosen uniformly at random, and let $\EE'$ be the event that $(y_1,\dots,y_h,x)$ form a copy of $K_{h+1}$; again by assumption, we have that $\P(\EE')<((1-2\eps)\rho)^{e(K_{h+1})}$. Therefore,
\[
((1-2\eps)\rho)^h = \frac{((1-2\eps)\rho)^{e(K_{h+1})}}{((1-2\eps)\rho)^{e(K_h)}} > \frac{\P(\EE')}{\P(\EE)} = \P(\EE' \mid \EE).
\]
Moreover, note that conditional on the event $\EE$, the probability of $\EE'$ is exactly $\ab{N_h}/\ab X$. Therefore, we conclude that
\[
\E[\ab{N_h} \mid \EE] \leq ((1-2\eps)\rho)^{h}\ab X.
\]
Noting that $(1-2\eps)\rho \leq (1-\eps)^2\rho \leq (1-\eps)p$, where the final step is by our assumption on $p$, we find that $\E[\ab{N_h} \mid \EE] \leq ((1-\eps)p)^h \ab X$. This in turn implies that
\[
((1-\eps)p)^h \ab X \geq \E[\ab{N_h} \mid \EE] \geq (1-\eps)p^h \ab X\cdot \P(\ab{N_h} \geq (1-\eps)p^h \ab X\mid \EE),
\]
and thus $\P(\ab{N_h} \geq (1-\eps)p^h \ab X \mid \EE) \leq (1-\eps)^{h-1} \leq 1-\eps$. Therefore, $\P(\ab{N_h} \leq (1-\eps)p^h \ab X \mid \EE) \geq \eps$. Denoting by $\delta \coloneqq \P(\EE)$, we conclude that
$\P(|N_h|\leq (1-\eps)p^h|X|)\ge \eps \delta$. 

We note now that if $p > 1-1/(2h)$, then $\ab{N_h} > \frac 12 \ab X$ with probability $1$, by the pigeonhole principle. On the other hand, in this case we also have $(1-\eps)p^h > (1-\eps)(1-1/(2h))^h \geq (9/16)(1-\eps)\geq\frac 12$, where we use the assumptions $h \geq 2$ and $\eps \leq 1/9$. Thus, in this case we have $\P(\ab{N_h} \leq (1-\eps) \rho^h \ab X) = 0$, a contradiction. Therefore we may assume henceforth that $p \leq 1-1/(2h)$.

Let $N_0\coloneqq X$ and for $i =1,\dots ,t$ set $N_i\coloneqq N_{i-1}\cap N(y_i)$. Note that this agrees with our earlier definition of $N_h$. We now observe that if $\ab{N_\tau} \geq (1-\eps/(2h)) p \ab{N_{\tau-1}}$ for all $\tau \in [h]$, then we have
\[
\frac{\ab{N_h}}{\ab X} = \prod_{\tau=1}^h \frac{\ab{N_\tau}}{\ab{N_{\tau-1}}} \geq \left( \left( 1- \frac{\eps}{2h}\right) p\right)^h > (1-\eps) p^h.
\]
In other words, whenever the event $\ab{N_h}\leq (1-\eps)p^h \ab X$ occurs, there 
must exist some minimal index $\tau\in [t]$ where $|N_\tau|<(1-\eps/(2h))p|N_{\tau-1}|$. If this event does not occur, we set $\tau\coloneqq \infty$.

By the pigeonhole principle, there must be some index $i^*\in [h]$ so that $\P(\tau = i^*)\ge \eps\delta/h$. Whence, there is some choice of $y_1^*,\dots,y_{i^*-1}^*$ so that $\P(\tau = i^*|y_1=y_1^*,\dots,y_{i^*-1}=y_{i^*-1}^*) \ge \eps\delta/h $. Writing $X^*\coloneqq  \bigcap_{i<i^*}N(y_i^*)$, and $Y' \coloneqq  \{y\in V_{i^*}: d_{X^*}(y)<(1-\eps/(2h))p|X^*|\}$, we get that $|Y'| \ge (\eps\delta/h)|V_{i^*}|\ge (\eps \delta/h)n$.

Additionally, we have that $|X^*|\ge ((1-\eps/(2h))p)^{i^*-1}|X|\ge \frac 12 p^{h-1}|X|$, where the final inequality uses that $\eps \leq 1$ and that $(1-1/(2h))^h \geq \frac 12$ for all $h$. At the same time, by our minimum degree assumption, we cannot have $i^*=1$, whence $|X^*|\le d_X(y_1^*) = p|X|$. We now apply \cref{lem:other increment} with $\xi = \eps/(2h)$ and $c \geq \frac 12 p^{h-1}$ to conclude that for $X' = X \setminus X^*$, we have $d_{X'}(y) \geq (1+c\xi)p \ab{X'}$ for all $y \in Y'$. The claimed result then follows by noting that
\[
\ab {X'} = \ab X - \ab{X^*} \geq (1-p)\ab X \geq \frac 1{2h} \ab X \geq \eps \rho^{h^2} \ab X,
\]
that
\[
\ab{Y'} \geq \frac{\eps \delta}{h}n \geq \frac{\eps((1-\eps)\rho)^{e(K_h)}}{h}n \geq \eps \rho^{h^2}n,
\]
and that
\[
c \xi \geq \frac{\eps p^{h-1}}{4h} \geq \frac{\eps((1-\eps)\rho)^{h-1}}{4h} \geq \eps \rho^{3h}.\qedhere
\]
\end{proof}

\subsection{The density increment argument}
\begin{definition}
    Let $q,h \geq 2$ be integers and $\eps \in (0,1]$ a parameter. We define
    $\eta(q,h,\eps)$ to be the maximum $\eta \in [0,1]$ so that for all $N\geq 1/\eta$, and for every $q$-coloring of $E(K_N)$, there exists a monochromatic $(1/q,\eps,\ge \eta N)$-rich inflation of $K_h$.
\end{definition}
We note that the set of such $\eta \subseteq [0,1]$ is a closed subset of $[0,1]$, hence we really can define $\eta(q,h,\eps)$ as a maximum, rather than a supremum. The next result gives a recursive lower bound on $\eta(q,h+1,\eps)$ in terms of $\eta(q,h,\eps/2)$.
\begin{proposition}\label{prop:main increment}
    For every $q\geq 2,h \geq 1$ and $\eps \in (0,\frac 19]$, we have
    \[
    \eta(q,h+1,\eps) \geq \left(\eps^4 q^{-h^2} \eta(q,h,\eps/2)\right)^{q^{6h}/\eps}.
    \]
\end{proposition}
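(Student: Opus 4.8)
The plan is to prove \cref{prop:main increment} by a density-increment argument, iterating \cref{lem: rich increment}. We start with a $q$-coloring of $E(K_N)$ for some large $N$, and we want to produce a monochromatic $(1/q,\eps,\geq \eta N)$-rich inflation of $K_{h+1}$ for $\eta$ as large as the stated bound. The key observation is that since the coloring uses $q$ colors, some color class has at least $\binom N2 / q$ edges, i.e.\ density at least $1/q$. By \cref{lem:dense to bipartite} (applied with parameter $\eps$, say), this color class---call it red---contains disjoint sets $A_0, B_0$ with $\ab{A_0},\ab{B_0} \geq (\eps^2/16)(1/q) N$ such that every $y \in B_0$ has red-degree at least $(1-\eps)(1/q)\ab{A_0}$ into $A_0$. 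Now $B_0$, equipped with the $q$-coloring inherited from $K_N$, is itself a $q$-colored complete graph on $\geq (\eps^2 N)/(16q)$ vertices, so by the definition of $\eta(q,h,\eps/2)$, it contains a monochromatic $(1/q, \eps/2, \geq \eta' \ab{B_0})$-rich inflation $V_1,\dots,V_h$ of $K_h$, where $\eta' = \eta(q,h,\eps/2)$; say this is in color blue (possibly $=$ red).

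**The iteration.** The plan is now to repeatedly apply \cref{lem: rich increment} with the rich inflation $V_1,\dots,V_h \subseteq B_0$ and the ``outer'' set $X \subseteq A_0$ carrying red edges. Initially we take $X = A_0$ and $p = (1-\eps)/q \geq (1-\eps/2)(1/q)$, so the hypotheses of \cref{lem: rich increment} are met with $\rho = 1/q$ and parameter $\eps/2$ (note $\eps/2 \leq 1/18 < 1/9$, and $\rho \leq 1/2$). Wait---there is a subtlety: \cref{lem: rich increment} wants the rich inflation to be a $(\rho,\eps_0)$-rich inflation and concludes a $(\rho, 2\eps_0)$-rich inflation, and it needs $p \geq (1-\eps_0)\rho$. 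Running it with $\eps_0 = \eps/2$ gives a $(1/q, \eps, \geq \text{something})$-rich inflation of $K_{h+1}$ in the first bullet, which is exactly what we want (in color red, since $X$ carries red edges and $V_1,\dots,V_h$ are internally blue---one should double-check that the resulting $K_{h+1}$-blowup is monochromatic; this works because the edges between $X$ and $Y$ are red and the $K_h$ on $V_1,\dots,V_h$ is blue, so actually we need $\text{red}=\text{blue}$ for a \emph{monochromatic} $K_{h+1}$). This is the first place where genuine care is needed, and I address it below. Assuming that is handled, each time the \emph{second} bullet of \cref{lem: rich increment} fires instead, we pass from $(X,Y')$ to $(X',Y')$ where $\ab{X'}\geq \eps_0 \rho^{h^2}\ab X$, $\ab{Y'}\geq \eps_0 \rho^{h^2}\cdot(\text{current }n)$, and the red density $p$ of $Y'$ into $X'$ increases multiplicatively by a factor $1 + \eps_0 \rho^{2h} \geq 1 + (\eps/2) q^{-2h}$. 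Since $p \leq 1$ always, this can happen at most $O(q^{2h}/\eps \cdot \log q) = O(q^{4h}/\eps)$ times---more precisely $\log(q)/\log(1+(\eps/2)q^{-2h}) \leq q^{4h}/\eps$ with room to spare---before the first bullet must fire. (At each step we replace $Y$ by the sub-inflation induced on $Y' \subseteq V_{i^*}$ and the other parts; crucially $Y'$ remains a $(1/q, \eps, \geq \cdot)$-rich inflation of $K_h$ since the first part's internal density and all canonical-copy counts are only helped by passing to the relevant product subsets---this monotonicity needs to be checked but is straightforward.)

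**Bookkeeping the constants.** Over $T \leq q^{4h}/\eps$ steps, the size parameter $n$ gets multiplied by $\eps_0\rho^{h^2} = (\eps/2) q^{-h^2}$ each time, starting from $\min\{\ab{A_0},\ab{B_0}\}\cdot \eta' \geq (\eps^2/(16q))\eta' N$. Hence the final rich inflation of $K_{h+1}$ has parts of size at least
\[
\frac{\eps^2}{16q}\,\eta(q,h,\eps/2)\cdot\left(\frac{\eps}{2}q^{-h^2}\right)^{T} N \geq \left(\eps^4 q^{-h^2}\eta(q,h,\eps/2)\right)^{q^{4h}/\eps} N,
\]
after absorbing the leading factors into the exponentiated base (using $\eps \leq 1/9$, $q,h\geq 2$, and that $T\geq 1$ so the small leading constants are dominated). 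This gives the claimed lower bound on $\eta(q,h+1,\eps)$, provided $N \geq 1/\eta(q,h+1,\eps)$, which propagates correctly because at each stage we only ever needed the current vertex set to have size at least $1/(\text{current }\eta)$, and our choices are consistent with that.

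**The main obstacle.** The delicate point is the color bookkeeping. \cref{lem: rich increment} is colorblind---it works with an abstract graph---so I need to set it up so that the ``outer'' red edges from $X$ and the ``inner'' edges of the $K_h$-blowup on $V_1,\dots,V_h$ are the \emph{same} color. The fix: when applying \cref{lem:dense to bipartite}, apply the recursive bound $\eta(q,h,\eps/2)$ not to $B_0$ with its full $q$-coloring, but observe instead that we want a red rich inflation inside $B_0$. So the cleaner setup is: first fix red to be the densest color, get $A_0, B_0$ from \cref{lem:dense to bipartite} so all of $B_0$ sees many red edges into $A_0$; then apply the inductive definition of $\eta(q,h,\eps/2)$ to $B_0$ to get a monochromatic rich inflation in \emph{some} color inside $B_0$. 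If that color is red, we run the iteration above with red outer edges and get a red $K_{h+1}$. If that color is some other color $c \neq \text{red}$, we are in even better shape: $V_1,\dots,V_h$ is a $c$-colored rich inflation of $K_h$ living inside $B_0$, and it is already a $(1/q, \eps/2, \geq \cdot)$-rich inflation of $K_{h+1}$? No---we still need an $(h{+}1)$st part connected in color $c$, which we do not automatically have. So in fact the correct move is: forget trying to reuse red; instead apply \cref{lem:dense to bipartite} \emph{to the color class $c$} where $c$ is whatever color the recursive step found, \emph{before} invoking the recursion---but that is circular. The genuinely correct resolution, which is the heart of the argument and the main thing to get right, is to interleave: apply the recursive bound to get the monochromatic (color $c$) rich inflation of $K_h$ on a vertex set $U$, \emph{then} apply \cref{lem:dense to bipartite} to the color-$c$ graph on a \emph{fresh} large vertex set disjoint from $U$ to produce the outer set $X$ with $c$-colored edges to everything---except $X$ must connect to $U$, not to a fresh set. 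So really one must apply \cref{lem:dense to bipartite} first to a color that is dense \emph{relative to the structure we will build}, which forces the two-phase structure: \cref{lem:dense to bipartite} on the densest color red gives $(A_0, B_0)$; recursion on $B_0$ gives color $c$; now consider the color-$c$ edges between $B_0$ and $A_0$. These need not be dense. The actual fix used in such arguments is to take $N$ large enough that we can afford to first split off red as above, and if the recursion returns $c\ne\text{red}$, simply restart the whole argument \emph{within} $B_0$ (which is still huge) having ``used up'' one color---since after at most $q$ such restarts some color must be the one the recursion returns, by pigeonhole on colors, giving the extra $q$-factor absorbed into the exponent. Making this termination-by-pigeonhole argument precise, and tracking that the size losses from the repeated restarts stay within the claimed bound, is the step I expect to require the most care; everything else is the mechanical iteration of \cref{lem: rich increment} with the constant bookkeeping sketched above.
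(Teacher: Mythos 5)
Your overall strategy is right—initialize with \cref{lem:dense to bipartite} applied to the densest color, recurse to get a rich inflation of $K_h$, and iterate \cref{lem: rich increment} until a rich inflation of $K_{h+1}$ appears—and you correctly identify the genuine difficulty: the color returned by the recursive call need not match the color whose edges your outer set $X$ carries. However, your proposed resolution of that difficulty does not work as stated. You suggest that if the recursion inside $B_0$ returns a color $c$ different from red, one can ``restart the whole argument within $B_0$ having used up one color'' and that after at most $q$ restarts pigeonhole forces agreement. This is unjustified: nothing is actually removed from the coloring, so at each restart you again peel off the (new) densest color and the recursion can again return a mismatching color; there is no mechanism forcing the outer color and the recursed color to coincide within $q$ rounds, and hence no termination guarantee. (Even if you tried to blacklist the returned color $c$ and re-invoke the recursion on $B_0$ with $q-1$ colors, that color's edges are still present in $B_0$ and the definition of $\eta(q,h,\eps/2)$ gives you no control over which color it returns.)

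The resolution in the paper is to maintain, in parallel, an outer set $X_i$ for \emph{every} color $i$ that has ever been returned by the recursion, together with a degree parameter $p_i$, all living outside a common shrinking vertex set $Y$. When the recursion on $Y$ returns a color $i$, either $i$ is new—so apply \cref{lem:dense to bipartite} to color $i$ inside $Y$ and split off a fresh $X_i$ and a smaller $Y$—or $i$ already has an $X_i$, in which case apply \cref{lem: rich increment} against $X_i$ and either win or increment $p_i$ (shrinking $Y$ and $X_i$). Because shrinking $Y$ preserves the minimum-degree conditions for all previously-stored colors, the structure is coherent; and since each $p_i \le 1$ and each increment multiplies $p_i$ by $1+\eps\rho^{2h}$, a given color can be chosen at most roughly $k/q$ times, so by pigeonhole the process halts within $k=q^{4h}/\eps$ rounds. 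That parallel bookkeeping (the set $S$ of active colors and the family $\{X_i\}_{i\in S}$) is the idea your write-up is missing; your constant-tracking and your use of \cref{lem:dense to bipartite} and \cref{lem: rich increment} are otherwise aligned with the paper's argument.
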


\begin{proof}

Fix some $q,h,\eps$ and consider $N\ge 1$ and some coloring $\chi:E(K_N)\to [q]$. For a color $i\in [q]$ we let $G_i\subseteq K_N$ denote the graph of edges receiving color $i$. We set $\rho \coloneqq 1/q$, $\eta_0\coloneqq \eta(q,h,\eps/2)$, and $k\coloneqq q^{6h}/\eps$. Note that this choice of $k$ implies that
\begin{equation}\label{eq:k choice}
    (1+\eps \rho^{3h})^{k/q} \geq \exp \left( \frac{\eps \rho^{3h}k}{2q}\right) \geq \exp \left( \frac{\eps k}{q^{3h+2}}\right)\geq e^{q} > 2q \geq \frac{1}{(1-\eps)\rho},
\end{equation}
where we use the inequality $1+x \geq e^{x/2}$ (valid for all $x \in [0,1]$) in the first step, the definition of $\rho$ and the assumption $q \geq 2$ in the second step, the definition of $k$ in the third step, and the definition of $\rho$ and the assumption $\eps \leq \frac 12$ in the final step.

We run the following process over several rounds.
At time $t$, we will have a set of vertices $Y= Y^{(t)} \subseteq V(K_N)$, a set of colors $S= S^{(t)}\subseteq [q]$, and for $i\in S$ a subset $X_i= X^{(t)}_i\subseteq V\setminus Y$. We shall further maintain that for all $i\in S$ and $y\in Y$, that $|N_{G_i}(y)\cap X_i| \ge p_i|X_i|$ for some certain $p_i= p_i^{(t)}$, which we will ensure satisfies $p_i\up t\ge (1-\eps)\rho$.

We shall initialize with $Y^{(0)} = V(K_N)$ and $S^{(0)} = \emptyset$ (whence there are no $X_i$'s or $p_i$'s to define). Now, in a step, we either have $|Y|\le 1/\eta_0$ (in which case we halt), or we can find some $V_1,\dots,V_{h}\subseteq Y$ which form a $(\rho,\eps/2,\ge \eta_0 |Y|)$-rich inflation of $K_h$ in some color $i\in [q]$. 

If $i \not \in S$, we set $S^{(t+1)} \coloneqq  S^{(t)}\cup \{i\}$. Then, using the fact that $e(G_i[V_1]) \geq \rho \binom{\ab{V_1}}2$, we may apply \cref{lem:dense to bipartite}. That lemma outputs a pair of sets $A,B \subseteq Y$, and we define $X_i \up{t+1}\coloneqq A, Y\up{t+1} \coloneqq B$ to be this pair, satisfying $\ab{X_i\up{t+1}}, \ab{Y\up{t+1}} \geq (\eps^2 \rho/32)\ab{V_1}\geq (\varepsilon^2\rho/32)(\eta_0\ab Y)$. Note that, since we shrink $Y= Y\up t$ to a subset $Y\up{t+1}$, we maintain the claimed properties for all colors in $S\up t$. For the new color $i$, which we have added to $S\up{t+1}$, we have by \cref{lem:dense to bipartite} that every vertex in $Y\up{t+1}$ has at least $(1-\eps)\rho \ab{X\up{t+1}}$ $G_i$-neighbors in $X\up{t+1}$, hence we also maintain the claimed property for the new color $i$, with $p_i\up {t+1} \geq (1-\eps)\rho$.

Otherwise, $i\in S$. Now if we have that adding $X_i$ to $V_1,\dots,V_{h}$ creates a $(\rho,\eps)$-rich inflation of $K_{h+1}$ in color $i$, we halt the process. If this does not happen, we can apply \cref{lem: rich increment} to pass to an increment $Y'\subseteq Y,X'\subseteq X_i$ where $|Y'|\ge \eps \rho^{h^2}(\eta_0 \ab{Y})$ and $|X'|\ge \eps \rho^{h^2}|X_i|$ where the density boosts by a factor of $(1+\eps\rho^{2h})$. We now set $X_i \up{t+1}=X'$ and $Y\up{t+1} = Y'$, and maintain $S\up{t+1}=S\up t$. By \cref{lem: rich increment}, we have $p_i\up{t+1} \geq (1+\eps \rho^{2h})p_i\up t$.

We claim that the process can only run for at most $k$ rounds. Indeed, if it runs longer, then there are $k'\coloneqq (k/q)+1$ steps $t_1<\dots <t_{k'}$ where we pick the same color $i$. At time $t_1$, we have $p_i\up{t_1}\ge (1-\eps)\rho$. This implies that $p_i\up {t_{k'}} \ge (1+\eps\rho^{2h})^{k'-1}p_i\up {t_1}>1$, by \eqref{eq:k choice}. This is impossible, showing that the process indeed halts after at most $k$ rounds.

Note that at every step of this process, we have
\[
\frac{\ab{Y\up{t+1}}}{\ab{Y\up t}} \geq \eps^4 \rho^{h^2} \eta_0,
\]
since $Y\up {t+1}$ is obtained by shrinking $Y$ by a factor of $\eps^2 \rho\eta_0/32$ or $\eps \rho^{h^2}\eta_0$, both of which are lower-bounded by $\eps^4 \rho^{h^2}\eta_0$ since $\eps \leq \frac 18$ and $\rho \leq 1$. Similarly, when we introduce a set $X_i$ it has size at least $(\eps^2 \rho/32)\eta_0 \ab Y$, and every subsequent step shrinks it by at most $\eps^4 \rho^{h^2} \eta_0$.

In other words, if we start with $N \geq (\eps^4 \rho^{h^2}\eta_0)^{-k}$, then we will be able to keep this process going without ever halting because we shrink too much. Thus, we can only halt by outputting a $(\rho,\eps)$-rich inflation of $K_{h+1}$. Moreover, for the same reason, when we output a $(\rho,\eps)$-rich inflation of $K_{h+1}$, its parts all have size at least $(\eps^4 \rho^{h^2}\eta_0)^k N$, which implies that
\[
\eta(q,h+1,\eps) \geq (\eps^4 \rho^{h^2} \eta_0)^k = (\eps^4 q^{-h^2} \eta(q,h,\eps/2))^{q^{6h}/\eps}.\qedhere
\]
\end{proof}
As a corollary of \cref{prop:main increment}, we obtain the following bound for $\eta(q,h,\eps)$.
\begin{corollary}\label{cor:eta LB}
    For every $q\geq 2,h \geq 1$ and $\eps \in (0,\frac 19]$, we have
    \[
    \eta(q,h,\eps) \geq q^{-q^{4h^2}/\eps^{2h}}
    \]
\end{corollary}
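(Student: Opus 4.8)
The plan is to prove \cref{cor:eta LB} by induction on $h$, using \cref{prop:main increment} as the inductive step. The key subtlety is that the recursion in \cref{prop:main increment} relates $\eta(q,h+1,\eps)$ to $\eta(q,h,\eps/2)$, so the parameter $\eps$ halves at each level of the induction; hence I should prove the bound for all $\eps \in (0,\frac 19]$ simultaneously, rather than fixing $\eps$ once and for all.

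For the base case $h=1$, I would observe that $\eta(q,1,\eps)=1$: a $(1/q,\eps,\geq N)$-rich inflation of $K_1$ is just a single set $V_1$ with $e(G_i[V_1]) \geq \frac 1q \binom{|V_1|}{2}$ in some color $i$, and for any $N$-vertex coloring some color class has at least $\frac 1q \binom N2$ edges by averaging, so $V_1 = V(K_N)$ works. (Strictly, the condition ``$((1-\eps)\rho)^{e(K_1)}$-inflation of $K_1$'' is vacuous since $e(K_1)=0$.) This beats $q^{-q^{4}/\eps^{2}}$ trivially. For the inductive step, assume $\eta(q,h,\eps/2) \geq q^{-q^{4h^2}/(\eps/2)^{2h}} = q^{-4^h q^{4h^2}/\eps^{2h}}$ for all $\eps \in (0,\frac 19]$, and feed this into \cref{prop:main increment}: one gets
\[
\eta(q,h+1,\eps) \geq \left(\eps^4 q^{-h^2} \cdot q^{-4^h q^{4h^2}/\eps^{2h}}\right)^{q^{4h}/\eps}.
\]

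The remaining work is purely a calculation to show the right-hand side is at least $q^{-q^{4(h+1)^2}/\eps^{2(h+1)}}$. Taking $-\log_q$ of both sides, the goal becomes
\[
\frac{q^{4h}}{\eps}\left(h^2 + \frac{4^h q^{4h^2}}{\eps^{2h}} + 4\log_q \tfrac{1}{\eps}\right) \leq \frac{q^{4(h+1)^2}}{\eps^{2(h+1)}} = \frac{q^{4h^2+8h+4}}{\eps^{2h+2}}.
\]
I would bound the left side crudely: since $\eps \leq \frac 19$, the dominant term is $\frac{q^{4h}}{\eps} \cdot \frac{4^h q^{4h^2}}{\eps^{2h}} = \frac{4^h q^{4h^2+4h}}{\eps^{2h+1}}$, and the other two terms ($\frac{q^{4h}h^2}{\eps}$ and $\frac{4q^{4h}\log_q(1/\eps)}{\eps}$) are much smaller, so the whole left side is at most, say, $\frac{2\cdot 4^h q^{4h^2+4h}}{\eps^{2h+1}}$. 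The right side is $\frac{q^{4h^2+8h+4}}{\eps^{2h+2}}$, so it suffices to check $2 \cdot 4^h q^{4h} \eps \leq q^{8h+4}$, i.e.\ $2\cdot 4^h \eps \leq q^{4h+4}$, which holds comfortably since $\eps \leq \frac 19$, $q \geq 2$, $h \geq 1$ (indeed $q^{4h+4} \geq 2^8 = 256 \geq 2 \cdot 4^h / 9$ fails for large $h$ — so I should be slightly more careful and instead use the factor of $\eps^{-1}$ more cleverly: keep one power of $q^{4h}$ in reserve, noting $q^{4h} \cdot 4^h \leq q^{4h} q^{2h} = q^{6h} \leq q^{8h+4}$, and absorb $2\eps \leq 1$). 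I expect this bookkeeping — making sure the exponents of $q$ and $\eps$ both come out correctly while keeping the inequalities valid in the full range $q\geq 2$, $h \geq 1$, $0 < \eps \leq \frac 19$ — to be the only real obstacle; it is routine but must be done carefully, and choosing to induct with $\eps$ free (so the hypothesis applies at $\eps/2$) is the one genuinely necessary idea.
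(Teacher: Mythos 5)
Your proposal is correct and follows essentially the same route as the paper: induct on $h$ with $\eps$ left free (so the hypothesis can be applied at $\eps/2$), use Proposition~\ref{prop:main increment} for the step, and then verify the exponent inequality by noting that the $q^{4h^2}/(\eps/2)^{2h}$ term dominates. Your self-correction about $2\cdot 4^h/9 \leq 256$ is actually a non-issue (the right-hand side is $q^{4h+4}$, which grows with $h$, not $q^8$), but the fix you offer via $4^h \leq q^{2h}$ is also valid and close in spirit to the paper's own bookkeeping.
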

\begin{proof}
    We prove this by induction on $h$. The base case $h=1$ is immediate: by the pigeonhole principle, one of the $q$ color classes in any $q$-coloring of $E(K_N)$ must contain at least $\frac 1q \binom N2$ edges, which is precisely a rich inflation of $K_1$ in this color. This shows that $\eta(q,1,\eps) = 1$ for all $\eps>0$, and in particular implies the claimed bound for $h=1$.

    Inductively, suppose we have proved the claimed result for some $h\geq 1$. By \cref{prop:main increment}, we have
    \begin{equation*}
        \eta(q,h+1,\eps) \geq \left(\eps^4 q^{-h^2} \eta(q,h,\tfrac\eps2)\right)^{q^{6h}/\eps} \geq \left( \eps^4 q^{-h^2} q^{-q^{4h^2}/(\eps/2)^{2h}}\right)^{q^{6h}/\eps} \geq \left( q^{-(4/\eps +h^2 + q^{4h^2}/(\eps/2)^{2h})}\right)^{q^{6h}/\eps},
    \end{equation*}
    where the final step uses that
     $\eps^4 \geq 2^{-4/\eps} \geq q^{-4/\eps}$ since $x \geq 2^{-1/x}$ for all $x \in (0,1]$. Therefore, if we denote by $\zeta \coloneqq -\log_q \eta(q,h+1,\eps)$ the negative of the exponent above, we conclude that
     \begin{align*}
         \zeta \leq \frac{q^{6h}}{\eps}\left( \frac 4 \eps + h^2 + \frac{q^{4h^2}}{(\eps/2)^{2h}}\right) =\frac{q^{6h}}{\eps}\left( \frac 4 \eps + h^2 + 2^{2h}\cdot \frac{q^{4h^2}}{\eps^{2h}}\right).
     \end{align*}
     Note that $4/\eps \leq q^{4h^2}/\eps^{2h}$, and similarly $h^2 \leq q^{4h^2}/\eps^{2h}$. Therefore,
     \[
     \zeta \leq \frac{q^{6h}}{\eps} \left( (2^{2h}+2) \frac{q^{4h^2}}{\eps^{2h}}\right) \leq 2^{2h+1} \frac{q^{4h^2+6h}}{\eps^{2h+1}} \leq \frac{q^{4h^2+8h+1}}{\eps^{2h+2}} \leq \frac{q^{4(h+1)^2}}{\eps^{2(h+1)}}.
     \]
     Recalling the definition of $\zeta$, this implies that
     \[
     \eta(q,h+1,\eps) \geq q^{-q^{4(h+1)^2}/\eps^{2(h+1)}},
     \]
     completing the inductive step.
\end{proof}

We are now finally ready to prove \cref{lem:monochromatic inflation}.
\begin{proof}[Proof of \cref{lem:monochromatic inflation}]
By \cref{cor:eta LB}, we have that $\eta(q,h,1/10) \geq q^{-100^h q^{4h^2}} = \eta$, where we recall the definition of $\eta$ from the statement of \cref{lem:monochromatic inflation}. Additionally, our assumption on $N$ implies that $N \geq 1/\eta \geq 1/\eta(q,h,1/10)$. Therefore, by the definition of $\eta(q,h,\eps)$, we conclude that every $q$-coloring of $E(K_N)$ contains a monochromatic $(1/q, 1/10, \geq \eta N)$-rich inflation of $K_h$. In particular, this is a $(\gamma,\geq \eta N)$-inflation of $K_h$, where
\[
\gamma = \left(\left(1-\frac 1{10}\right) \frac 1 q\right)^{e(K_h)} \geq (2q)^{-e(K_h)},
\]
which is precisely what we wanted to prove.
\end{proof}

\section{Concluding remarks}\label{sec:conclusion}
We remark that, although we stated \cref{thm:nikiforov partite} under the assumption $\gamma \leq \frac 12$, essentially the same result actually holds for all $\gamma \in (0,1)$, including when $\gamma = 1-o(1)$. 
\begin{proposition}\label{prop:gamma to 1}
    If $G$ contains a $(\gamma,n)$-inflation of $H$, then $H[k]\subseteq G$, where
    \[
    k = \Omega_H \left( \frac{\log((1-\gamma)n)}{\log(1/\gamma)}\right).
    \]
\end{proposition}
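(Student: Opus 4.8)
The plan is to reduce the case $\gamma$ close to $1$ to the regime $\gamma \le \frac12$ already handled by \cref{thm:nikiforov partite}, by first showing that a $(\gamma,n)$-inflation with $\gamma$ large must contain a sub-inflation on considerably smaller parts but with density bounded below by an absolute constant like $\frac12$. Concretely, pass to the partite setting and consider the tuple $(V_1,\dots,V_h)$ with $|V_i|=n$ and at least $\gamma n^h$ canonical copies of $H$. The idea is to sample random subsets $U_i \subseteq V_i$, each of size $m$ for a suitable $m = \Theta((1-\gamma)n)$ (say $m = \lfloor (1-\gamma)n/2 \rfloor$, assuming this is at least $1$; if $(1-\gamma)n = O(1)$ the conclusion is vacuous). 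The expected number of canonical copies surviving in $(U_1,\dots,U_h)$ is $\gamma \prod_i |U_i|$, but we want more: we want the surviving density to be at least $\frac12$ with positive probability. This is a second-moment / concentration statement — the fraction of "missing" copies, which has expectation $1-\gamma$, should not blow up by more than a constant factor on the smaller random subsets. Provided $m(1-\gamma)$ is bounded below by a constant, a union bound or a direct variance computation shows that with positive probability $(U_1,\dots,U_h)$ contains at least $\frac12 \prod_i |U_i| = \frac12 m^h$ canonical copies of $H$, i.e.\ it is a $(\tfrac12, m)$-inflation.

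Once we have such sets $U_1,\dots,U_h$ of size $m = \Theta((1-\gamma)n)$ forming a $(\frac12, \ge m)$-inflation of $H$, we simply apply \cref{thm:nikiforov partite} with $\gamma$ replaced by $\frac12$ (an absolute constant) and $n$ replaced by $m$. This yields a copy of $H[k]$ with
\[
k \ge \beta_H \frac{\log m}{\log 2} = \Omega_H(\log m) = \Omega_H\!\left( \log((1-\gamma)n)\right),
\]
and since $\log(1/\gamma) \le \log 2$ is bounded, we may freely insert a factor of $1/\log(1/\gamma)$ without loss, matching the claimed bound $k = \Omega_H(\log((1-\gamma)n)/\log(1/\gamma))$.

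The main obstacle is the probabilistic step: showing that the density of canonical $H$-copies does not degrade below $\frac12$ when restricting to random subsets of size $m = \Theta((1-\gamma)n)$. The naive bound only preserves density $\gamma$ in expectation, so one needs a genuine concentration argument. The cleanest route is probably to bound the variance (or use a Chernoff-type bound on the count of \emph{non-edges} of the relevant auxiliary hypergraph of non-copies): the number of "bad" $h$-tuples (tuples in $\prod U_i$ that do \emph{not} form a canonical copy) has expectation $(1-\gamma)m^h$, and one wants this to be at most $\frac12 m^h$ with positive probability, which follows from Markov's inequality alone once the expectation is at most, say, $\frac14 m^h$ — but this only requires $1-\gamma \le \frac14$, i.e.\ $\gamma \ge \frac34$. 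For $\gamma \in [\frac12, \frac34]$ the original \cref{thm:nikiforov partite} (or a trivial extension of it) already applies directly since $\log(1/\gamma)$ is then bounded away from $0$ and $\log n = \Theta(\log((1-\gamma)n))$. So in fact one can split into two trivial ranges and avoid any serious concentration argument: for $\gamma \le \frac34$ apply \cref{thm:nikiforov partite} directly (after checking $\log((1-\gamma)n) = \Theta(\log n)$ in this range, up to additive constants absorbed into $\Omega_H$), and for $\gamma > \frac34$ use Markov's inequality on bad tuples to pass to parts of size $m = \Theta((1-\gamma)n)$ with density $\ge \frac12$, then apply \cref{thm:nikiforov partite}. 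This dichotomy is what I expect to require the most care to state cleanly, but each individual piece is routine.
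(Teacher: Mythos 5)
There is a genuine gap at the final step. You claim that since $\log(1/\gamma)\le \log 2$ is bounded, you ``may freely insert a factor of $1/\log(1/\gamma)$ without loss.'' This reverses the direction of the inequality: when $\gamma\to 1$ we have $\log(1/\gamma)\to 0$, so $1/\log(1/\gamma)\to\infty$, and the target bound $\Omega_H\bigl(\log((1-\gamma)n)/\log(1/\gamma)\bigr)$ is therefore \emph{far stronger} than the bound $\Omega_H\bigl(\log((1-\gamma)n)\bigr)$ that your argument produces. Concretely, for $\gamma = 1-n^{-1/2}$ the claimed bound is $k=\Omega_H(n^{1/2}\log n)$, whereas your reduction to the $(\tfrac12,m)$-inflation only yields $k=\Omega_H(\log n)$, losing a polynomial factor. (This is precisely the sharpness regime noted after the proposition: a random bipartite graph of density $1-n^{-(1-c)}$ shows the $1/\log(1/\gamma)$ factor in the target is real and cannot be discarded.) The underlying problem is that your strategy --- shrinking to parts of size $m=\Theta((1-\gamma)n)$ to restore density $\tfrac12$ and then applying \cref{thm:nikiforov partite} as a black box --- irreversibly forgets that the original density was close to $1$, which is exactly the information that makes $k$ large.

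The paper's proof is necessarily different: rather than reduce to the bounded-density regime, it reruns the dependent random choice inside \cref{lem:induct} with the sampling parameter retuned to $s=\Theta(\log(\eps n)/\eps)$ where $\eps=1-\gamma$, so that $|V_h^*|\ge s/2 = \Omega_H(\log(\eps n)/\eps)$, while simultaneously choosing $n'=\gamma^s n = \Theta((\eps n)^{1/3}/\eps)$ so that $\log((1-\gamma')n')$ stays comparable to $\log((1-\gamma)n)$ throughout the induction (avoiding the failure mode $n'<1/\eps$). In the regime $\gamma>1-n^{-1/4}$ they also first prune low-degree vertices to maintain the canonical-copy count, since the variable $Y$ from \cref{lem:induct} no longer controls this. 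Your dichotomy correctly handles $\gamma\le\tfrac34$, where $\log(1/\gamma)=\Theta(1)$, but the nontrivial regime $\gamma\to1$ genuinely requires re-entering the dependent random choice argument rather than invoking \cref{thm:nikiforov partite} as a black box.
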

Note that this bound is again best possible, and generalizes \cref{thm:nikiforov partite}; in particular, the extra factor of $1-\gamma$ in the numerator is immaterial when $\gamma \leq \frac 12$ as in \cref{thm:nikiforov partite}, but is necessary and sharp when $\gamma \to 1$ (indeed, already for $H=K_2$, a random bipartite graph of edge density $1-n^{-(1-c)}$ contains no $K_{k,k}$ with $k=10c\log(n)n^{1-c}$).

To prove Proposition~\ref{prop:gamma to 1}, one argues as follows.
Recall that for small $\eps$, $\log(1/(1-\eps)) = \Theta(\eps)$, so writing $\gamma = 1-\eps$ we now wish to prove that $k = \Omega_H( \frac{\log(\eps n)}{\eps})$ for $\eps<1/2$. 
To do so, we shall establish a more precise version of Lemma~\ref{lem:induct}, where we now have $|V_h^*| = \Omega_H(\frac{\log(\eps n)}{\log(1/\gamma)}), n'= \frac{(\eps n)^{1/3}}{\eps}$ and $\gamma' = \gamma^{C_H}\ge 1-C_H\eps$. Once this is established, the result follows by induction on $h$ as in the proof of Theorem~\ref{thm:nikiforov partite} (the important point is that we still have $\log((1-\gamma')n') = \Omega_H(\log((1-\gamma)n))$, so passing to this subgraph is still `cheap' --- this would not be the case if $n' = \sqrt{n}<1/\eps$).

The refined version of Lemma~\ref{lem:induct} can be split into two regimes. First, we note that if $\gamma \le 1-\Omega(n^{-1/2})$, then the proof from Section~\ref{sec:nikiforov} works essentially unchanged (giving $|V_h^*| \ge \Omega(\frac{\log(n)}{\log(1/\gamma)}),n' = \sqrt{n}$). Indeed, the only place where we used the assumption $\gamma \leq \frac 12$ is in estimating $\E[Z]$ in the proof of \cref{lem:induct}, and a similar estimate (which is sufficiently strong for the rest of the proof) holds for all $\gamma \leq 1-\Omega(n^{-1/2})$. In particular, if $\gamma \le 1-n^{-1/4}$, we have that $n' =\sqrt{n} \ge \Omega(\frac{(\eps n)^{1/3}}{\eps})$, so we are done if $\gamma \le 1-n^{-1/4}$

So now we may assume $\gamma = 1-\eps $ for $ \eps \le n^{-1/4}$. Here, given a $(\gamma,n)$-inflation $(V_1,\dots,V_h)$ of $H$, we begin by deleting all vertices in any $V_i$ with fewer than $(1-h^2\eps)n$ neighbors in any other $V_j$ with $ij \in E(H)$. Since any such vertex lies in at most $(1-h^2\eps) n^{h-1}$ canonical copies of $H$, after deleting them we still have a $(1-O_H(\eps), \geq n/2)$-inflation of $H$. We now apply the same dependent random choice argument as in the proof of \cref{lem:induct}, with two changes: we now sample $s = \Theta(\log(\eps n)/\eps)$ random vertices, chosen so that $\gamma^s n = \Theta((\eps n)^{1/3}/\eps)$, and we no longer keep track of the random variable $Y$. The rest of the argument goes through unchanged, giving us the set $V_h^*$ of size $\ab{V_h^*}\geq s/2$ and subsets $V_1',\dots,V_t'$, each of size at least $\gamma^s n = (\eps n)^{\Omega(1)}/\eps$. Moreover, although we no longer have the random variable $Y$ to guarantee that we preserve many canonical copies of $H' = H \setminus \{h\}$, the minimum degree condition we established does guarantee this. Noting that
\[
\ab{V_h^*} \geq \frac s2 = \Omega_H \left(\frac{\log(\eps n)}{\eps}\right) = \Omega_H \left( \frac{\log((1-\gamma)n)}{\log(1/\gamma)}\right),
\]
we see that we get the desired result.

While our work gives optimal results for triangle-free graphs, a number of important problems remain open,
the most significant of which is to prove results like \cref{thm:main,thm:multicolor ramsey} for other graphs $H$; in particular, as discussed in the introduction, such results for $H=K_3$ would have far-reaching consequences.

A more modest question, which we nonetheless find interesting, is to obtain the  correct bounds for \cref{cor:eta LB}. It would be nice to show that $\eta(q,h, q^{-C})\geq q^{-O_h(q)}$, or possibly even $\eta(q, H, q^{-C})\geq r(H;q)^{-O_H(1)}$ for general graphs $H$. Such bounds would give essentially optimal bounds on how large $k$ needs to be for \cref{thm:multicolor ramsey} to hold. In the case of $h=3$, one can use recent work of Kelley, Lovett, and Meka \cite{2308.12451} in order to do the density-increment argument more efficiently and prove that $\eta(q,3,q^{-C})\geq q^{-O(q\log^2(q))}$. However, we already hit a stumbling block at $h=4$, and do not know how to establish $\eta(q,4,q^{-C})\geq q^{-q^{1+o(1)}}$.

\section*{Acknowledgments} We are grateful to the anonymous referee for many helpful comments.

\bibliographystyle{amsplain}
\bibliography{refs.bib}

\begin{aicauthors}
\begin{authorinfo}[antonio]
Ant\'onio Gir\~ao\\
Department of Mathematics\\
University College London\\
London WC1E 6BT, UK\\
a\imagedot{}girao\imageat{}ucl\imagedot{}ac\imagedot{}uk
\end{authorinfo}
\begin{authorinfo}[zach]
Zach Hunter\\
Department of Mathematics\\
ETH Z\"urich\\
8092 Z\"urich, Switzerland\\
zach\imagedot{}hunter\imageat{}math\imagedot{}ethz\imagedot{}ch
\end{authorinfo}
\begin{authorinfo}[yuval]
Yuval Wigderson\\
Institute for Theoretical Studies\\
ETH Z\"urich\\
8006 Z\"urich, Switzerland\\
yuval\imagedot{}wigderson\imageat{}eth-its\imagedot{}ethz\imagedot{}ch
\end{authorinfo}
\end{aicauthors}

\end{document}